   \def\MR#1{}
\newtheorem{theorem}{Theorem}[section]
\newtheorem{lemma}[theorem]{Lemma}
\newtheorem{proposition}[theorem]{Proposition}
\theoremstyle{definition}
\newtheorem{remark}[theorem]{Remark}
\numberwithin{equation}{section}
\newcommand{\RR}{\ensuremath{\mathbb{R}}}
\newcommand{\prtl}{\ensuremath{\partial}}
\newcommand{\supp}{\ensuremath{\text{supp}}}
\newcommand{\wtM}{\widetilde{M}}
\newcommand{\Rtube}{{{\mathcal T}_R(\tilde \gamma)}}
\newcommand{\csch}{\text{csch}\,}
\newcommand{\mbm}{\mathbf{m}}
\title[Geodesic restrictions in nonpositive curvature]{On logarithmic improvements of critical geodesic restriction bounds in the presence of nonpositive curvature}
\author[M. D. Blair]{Matthew D. Blair}
\address{Department of Mathematics and Statistics, University of New Mexico, Albuquerque, NM 87131, USA}
\email{blair@math.unm.edu}
\begin{document}
\begin{abstract}
We consider upper bounds on the growth of $L^p$ norms of restrictions of eigenfunctions and quasimodes to geodesic segments in a nonpositively curved manifold in the high frequency limit.  This sharpens results of Chen and Sogge as well as Xi and Zhang, which showed that the crux of the problem is to establish bounds on the mixed partials of the distance function on the covering manifold restricted to geodesic segments.  The innovation in this work is the development of a formula for the third variation of arc length on the covering manifold, which allows for a coordinate free expressions of these mixed partials.
\end{abstract}
\maketitle
\section{Introduction}\label{S:intro}
Let $(M,g)$ be a compact Riemannian manifold of dimension $n$ and $\Delta_g$ the associated nonpositive Laplace operator.  It is well known that the spectrum of $-\Delta_g$ is discrete in this case, meaning there exists a sequence of eigenfunctions $\{ e_{\lambda_j}\}_{j=1}^\infty$ forming a basis for $L^2(M) = L^2(M,dV_g)$ satisfying
$$
-\Delta_g e_{\lambda} = \lambda^2e_{\lambda}
$$
with $\lambda \geq 0$ (so that $\lambda$ is an eigenvalue of $\sqrt{-\Delta_g}$).  This paper also considers spectral clusters or quasimodes, which we define as the range of the spectral projector $\mathbf{1}_{[\lambda,\lambda+f(\lambda)]}(\sqrt{-\Delta_g})$ where $f:[0,\infty)\to (0,\infty)$ is nonincreasing.  They are approximations to eigenfunctions in that
\[
\left\|(\lambda -\sqrt{-\Delta_g})\circ\mathbf{1}_{[\lambda,\lambda+f(\lambda)]}(\sqrt{-\Delta_g})\right\|_{L^2(M) \to L^2(M)} \lesssim f(\lambda),
\]
and hence the composition loses only by $f(\lambda)$.  Note that we consider exact eigenfunctions to be quasimodes since
\begin{equation}\label{projectorasidentity}
\mathbf{1}_{[\lambda,\lambda+f(\lambda)]}(\sqrt{-\Delta_g})e_\lambda = e_\lambda.
\end{equation}

A topic of considerable interest has been to study the growth of $L^p$ norms of these eigenfunctions and quasimodes in the high frequency limit as $\lambda \to \infty$ for $2 \leq p \leq \infty$.  They are viewed as a means of measuring the phase space concentration attributes of these modes.  One of the seminal works is that of Sogge \cite{sogge88}, who proved bounds on quasimodes with $f(\lambda)\equiv 1$,
\begin{equation}\label{soggebound}
\left\|\mathbf{1}_{[\lambda,\lambda+f(\lambda)]}(\sqrt{-\Delta_g})\right\|_{L^2(M)\to L^p(M)}\lesssim \lambda^{\delta(p,n)},\qquad \lambda \geq 1, \;p \in [2,\infty]
\end{equation}
with $\delta(p,n) = \max(\frac{n-1}{2}(\frac 12 - \frac 1p), \frac{n-1}{2}-\frac np )$.  These bounds give the sharp exponents for eigenfunctions on the round sphere $\mathbb{S}^n$.  Indeed, the zonal harmonics show that one cannot improve the exponent $\frac{n-1}{2}-\frac np $ and the highest weight spherical harmonics show that the exponent $\frac{n-1}{2}(\frac 12 - \frac 1p)$.  Moreover, on any $(M,g)$, these bounds are sharp for quasimodes in the range of $\mathbf{1}_{[\lambda,\lambda+1]}(\sqrt{-\Delta_g})$, which can be seen by the construction of modes with the same phase space concentration characteristics as these families of harmonics.  However, in general, it is not expected that the exponent in \eqref{soggebound} is sharp for eigenfunctions unless $(M,g)$ possesses certain dynamical features similar to that of the round sphere.  It is thus interesting to determine geometric or dynamical conditions which show that the exponent $\delta(p,n)$ in \eqref{soggebound} can be improved for eigenfunctions or quasimodes.  However, the examples referenced here mean that we must take $f(\lambda)$ to be $o(1)$ as $\lambda \to \infty$.

The works \cite{BGTrestr}, \cite{Hu}, \cite{reznikov2004norms}, \cite{ChenSogge} considered the similar problem of obtaining bounds on restrictions of eigenfunctions to geodesic segments.  For such problems, we limit the discussion to $n=2,3$.  Given a unit speed geodesic segment $\gamma:[0,1]\to M$, denote the restriction map as $\mathcal{R}_\gamma f := f|_{\gamma}$.  These works showed that for some implicit constant independent of $\gamma$,
\begin{equation}\label{restrictionbound}
\left\|\mathcal{R}_\gamma\circ\mathbf{1}_{[\lambda,\lambda+f(\lambda)]}(\sqrt{-\Delta_g})\right\|_{L^2(M)\to L^p(\gamma)}\lesssim \lambda^{\kappa(p,n)}, \quad \lambda \geq 1, \;p \in [2,\infty]
\end{equation}
where $\gamma$ is endowed with arc length measure and $\kappa(p,2) = \max(\frac 14, \frac 12-\frac 1p)$ and $\kappa(p,3) = 1-1/p$.  Moreover, these exponents are sharp for eigenfunctions on the round sphere and quasimodes in the range of $\mathbf{1}_{[\lambda,\lambda+1]}(\sqrt{-\Delta_g})$ for arbitrary $(M,g)$ by the same constructions.

When $p=\infty$ the exponent in \eqref{soggebound} is $\delta(\infty,n) = \frac{n-1}{2}$, and in this case the bounds in fact follow from pointwise Weyl laws.  As a consequence of the work of \cite{Berard}, it was shown that when the sectional curvatures of $(M,g)$ are nonpositive, the bound on the remainder in the pointwise Weyl law can be improved by a factor of $1/\log \lambda$.  Hassell and Tacy \cite{HassellTacyNonpos} expanded on the analysis in this work, showing that in the presence of nonpositive curvature, one has logarithmic gains in the bounds \eqref{soggebound} for $p \in (\frac{2(n+1)}{n-1},\infty]$ in that
\[
\left\|\mathbf{1}_{[\lambda,\lambda+(\log\lambda)^{-1}]}(\sqrt{-\Delta_g})\right\|_{L^2(M)\to L^p(M)}\lesssim \frac{\lambda^{\delta(p,n)}}{\sqrt{\log \lambda}},\qquad \lambda \geq 1,
\]
In \cite{ChenTAMS}, Chen showed that similarly, one has the bound
\begin{equation*}
\left\|\mathcal{R}_\gamma\circ\mathbf{1}_{[\lambda,\lambda+(\log\lambda)^{-1}]}(\sqrt{-\Delta_g})\right\|_{L^2(M)\to L^p(\gamma)}\lesssim \frac{\lambda^{\kappa(p,n)}}{\sqrt{\log\lambda}}, \qquad \lambda \geq 1,
\end{equation*}
where $p > 4$ when $n=2$ and $p>2$ when $n=3$.  Both $\delta(p,n)$, $\kappa(p,n)$ are the same as the exponents given above.

Recent works of the author and Sogge \cite{BlairSoggeToponogov}, \cite{blair2015refined} showed that in the same context of nonpositive curvature and $f(\lambda) = (\log\lambda)^{-1}$, one can observe similar logarithmic gains in \eqref{soggebound} when $2< p < \frac{2(n+1)}{n-1}$ and in \eqref{restrictionbound} when $n=2$ and $2 < p< 4$.  The main difference is that in each case the exponent of $(\log\lambda)^{-1}$ tends to 0 as $p \nearrow\frac{2(n+1)}{n-1}$ and $p \nearrow 4$ respectively.

Despite this success, the matter of establishing improved $L^p(M)$ bounds at the so-called ``critical" exponents of $p=\frac{2(n+1)}{n-1}$ in the presence of nonpositive curvature has shown to be a subtle matter.  The only deterministic results known at the time of this writing are due to Sogge \cite{sogge2015improved}, showing there is a gain of $(\log \log \lambda)^{-2/(n+1)^2}$.  The reason $p=\frac{2(n+1)}{n-1}$ is considered to be critical is that there are a spectrum of phase space concentration scenarios which saturate the bounds \eqref{soggebound} with $f(\lambda) \equiv 1$.  For example, the families of harmonics on $\mathbb{S}^n$ referenced above both saturate this bound as do the analogous quasimodes in the range of $\mathbf{1}_{[\lambda,\lambda+1]}(\sqrt{-\Delta_g})$ for general $(M,g)$.  The same holds true for \eqref{restrictionbound} when $(p,n) = (4,2)$ or $(p,n) = (2,3)$.  Consequently, to improve upon \eqref{soggebound}, \eqref{restrictionbound} at these critical indices, one has to simultaneously rule out several scenarios for phase space concentration.

On the other hand, works of Chen and Sogge \cite{ChenSogge} and Xi and Zhang \cite{xi2016improved} obtain improved bounds in \eqref{restrictionbound} at the critical exponent $p=4$ when $n=2$ in the presence of nonpositive curvature.  The former improved the $O(\lambda^{1/4})$ bound in \eqref{restrictionbound} to $o(\lambda^{1/4})$, and the latter quantified this, proving a gain of $(\log \log\lambda)^{- 1/8}$ in the bounds for general nonpositive curvature and a $(\log \lambda)^{-1/4}$ gain for surfaces of constant curvature with $f(\lambda) = (\log \lambda)^{-1}$.  The work \cite{ChenSogge} also showed that for $(M,g)$ of constant curvature, with $(p,n) = (2,3)$, the $O(\lambda^{1/2})$ bound can be improved to $o(\lambda^{1/2})$.

The works \cite{HezariRiviere}, \cite{hezari2016quantum} obtain logarithmic gains at critical exponents in the cases above for $(M,g)$ of negative curvature, but use quantum ergodic methods and only yield an improvement for a density one subsequence.

As observed in \cite{BGTrestr}, when $M=\mathbb{T}^n$ is the flat torus, the exponent in \eqref{restrictionbound} can be taken to be much smaller for eigenfunctions $e_\lambda$ due to the fact that the $L^\infty(M)$ bounds are much stronger.  In \cite{BourgainRudnickToralInvent}, \cite{BourgainRudnickToralGAFA}, \cite{BourgainRudnickToralIJM}, the authors considered $L^p$ bounds on restrictions of eigenfunctions to more general families of submanifolds, such as those with nonvanishing curvatures, and their relations with nodal sets of eigenfunctions on $\mathbb{T}^n$.

The purpose of this work is to first to sharpen on the work of Xi and Zhang \cite{xi2016improved}, showing the $(\log \lambda)^{-1/4}$ gains they proved when $(p,n) = (4,2)$ for constant curvature extend to the case of general nonpositive curvature.  We also show that for $(M,g)$ of constant negative curvature when $n=3$, the $o(\lambda^{1/2})$ gains of \cite{ChenSogge} can be similarly quantified.  Establishing gains in these estimates where $(\log \lambda)^{-1}$ is raised to some power is significant in that it appears to the best that can be done in the current realm of wave kernel methods, since it is unclear how to understand the kernel for time scales beyond the ``Ehrenfest time" of $t \approx \log \lambda$.

\begin{theorem}\label{T:maintheorem2D}
Let $(M,g)$ be a compact Riemannian surface with sectional curvatures pinched between $-1$ and $0$.  Then for some $C$ independent of the unit length geodesic segment $\gamma$
\begin{equation}\label{main2Dbound}
\left\|\mathcal{R}_\gamma\circ\mathbf{1}_{[\lambda,\lambda+(\log\lambda)^{-1}]}(\sqrt{-\Delta_g})\right\|_{L^2(M) \to L^4(\gamma)} \leq C \frac{\lambda^{\frac 14}}{(\log\lambda)^{\frac 14}}, \quad \lambda \geq 1.
\end{equation}
\end{theorem}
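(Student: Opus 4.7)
The plan is to follow the overall framework of Xi-Zhang \cite{xi2016improved} (which extends Chen-Sogge \cite{ChenSogge}) at the critical index $p=4$, replacing the explicit trigonometric identities that drive their constant-curvature argument with intrinsic Riemannian information coming from the first, second and third variations of arc length on the universal cover. The first step is the standard reduction: for $\chi\in\mathcal{S}(\RR)$ with $\hat\chi$ supported in a small interval and $\chi(0)=1$, one replaces $\ind_{[\la,\la+(\log\la)^{-1}]}(\sqrt{-\Delta_g})$, up to acceptable $L^2\to L^4(\gamma)$ error, by $\chi(T(\la-\sqrt{-\Delta_g}))$ with $T=c\log\la$. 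Expressing this operator through the half-wave group and lifting to the universal cover $\wtM$ yields a kernel on $\gamma\times\gamma$ of the form
\[
K_\la(s,t)=\sum_{\alpha\in\Gamma}K_{\la,\alpha}(s,t),
\]
where each $K_{\la,\alpha}$ is an oscillatory integral with phase dominated by $\la\, d_{\tg}(\tildegam(s),\alpha\tildegam(t))$. Split the sum into a stable piece over $\alpha\in\Stab$ and an oscillatory piece over $\alpha\in\notstab$; the stable piece reproduces the baseline $\la^{1/4}$ bound, so all savings must come from the oscillatory piece.

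For the oscillatory piece, apply a bilinear/$TT^*$ reduction that uses $\|F\|_{L^4(\gamma)}^4=\||F|^2\|_{L^2(\gamma)}^2$ to rewrite the problem as an $L^2$ estimate for the product kernel
\[
\sum_{\alpha_1,\alpha_2\in\notstab}K_{\la,\alpha_1}(s_1,t_1)\,\overline{K_{\la,\alpha_2}(s_2,t_2)},
\]
whose phase is $\Phi_{\alpha_1}(s_1,t_1)-\Phi_{\alpha_2}(s_2,t_2)$ with $\Phi_\alpha(s,t):=d_{\tg}(\tildegam(s),\alpha\tildegam(t))$. For each well-separated pair $(\alpha_1,\alpha_2)$, a stationary-phase/Hörmander-type bound should produce oscillatory decay driven by the mixed partial derivatives of $\Phi_\alpha$; combined with an elementary counting of the $\alpha$'s contributing in the $\log\la$-radius ball on the cover, the bookkeeping of \cite{xi2016improved} then yields the target $(\log\la)^{-1/4}$ improvement.

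The main technical obstacle, and the innovation promised by the abstract, is obtaining quantitative control on $\partial_s\partial_t\Phi_\alpha$, $\partial_s^2\partial_t\Phi_\alpha$ and $\partial_s\partial_t^2\Phi_\alpha$ using only the pinched curvature hypothesis $-1\leq K_g\leq 0$. The first variation of arc length expresses $\partial_s\Phi_\alpha$ as the cosine of an angle along the minimizing geodesic between $\tildegam(s)$ and $\alpha\tildegam(t)$; the second variation expresses $\partial_s\partial_t\Phi_\alpha$ through boundary data of Jacobi fields along that same connecting geodesic, and nonpositive curvature prevents these Jacobi fields from vanishing, which is what rules out the worst degeneracies. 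The third-order mixed derivatives, however, are not accessible from Jacobi fields alone. I would therefore derive a coordinate-free third variation of arc length, expressing $\partial_s^2\partial_t\Phi_\alpha$ as an integral along the connecting geodesic involving the curvature tensor, the relevant Jacobi fields, and their covariant derivatives. Sectional-curvature comparison then converts this expression into the upper and lower bounds needed as input for the bilinear oscillatory-integral analysis, so that the Xi-Zhang scheme runs through with the same $-1/4$ exponent under the weaker hypothesis of the theorem.
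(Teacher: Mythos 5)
Your overall skeleton matches the paper: the $\chi(T(\lambda-\sqrt{-\Delta_g}))$ regularization, lifting to the universal cover, splitting into a near-tube piece (handled by Toponogov and counting) and a far piece, and the key innovation of a coordinate-free third variation formula to replace the explicit normal-coordinate computations of Chen--Sogge and Xi--Zhang. However there is a substantive gap in the heart of the argument. You write that ``nonpositive curvature prevents these Jacobi fields from vanishing, which is what rules out the worst degeneracies,'' but this misidentifies the degeneracy. The second variation gives $\partial_{rs}^2\phi = -\langle V_0^\perp, D_t W_0^\perp\rangle$, and while Rauch comparison does bound $|D_t W_0^\perp|_g$ from below (once one is outside the tube, so $|\dot\gamma(s_0)^\perp|$ is not tiny), the inner product can still vanish because $V_0^\perp = \dot\eta(r_0)^\perp$ can be arbitrarily small: this happens exactly when $\eta$ is nearly tangent to the connecting geodesic $\sigma$. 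That is the degenerate regime, and it is not ruled out by any comparison estimate. What the paper actually needs, and proves in Lemma \ref{T:mixedpartial}, is a dichotomy: either $|\partial_{rs}^2\phi|\gtrsim e^{-CT}$, or else the angle $\angle(\dot\eta(r_0),\pm\dot\sigma_0)$ is exponentially small, in which case the third variation formula is dominated by the term $|D_t W_0^\perp|_g^2\,\langle V_0,\dot\sigma_0\rangle$, which is bounded below precisely because $|V_0^\perp|$ small forces $|\langle V_0,\dot\sigma_0\rangle|\geq 1/2$. Crucially, converting ``$\langle V_0^\perp, D_t W_0^\perp\rangle$ small'' into ``$|V_0^\perp|$ small'' uses two-dimensionality: in a $2$-dimensional tangent space both vectors are orthogonal to $\dot\sigma_0$ and hence collinear. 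This rigidity is load-bearing and is not available from sectional-curvature comparison alone; it is also why the $n=3$ theorem (Lemma \ref{T:mixedpartial3D}) needs constant curvature and a totally geodesic submanifold argument. Your ``sectional-curvature comparison then converts this expression into the upper and lower bounds needed'' glosses over precisely the part of the proof that is new and delicate.

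Two further, smaller issues. First, you propose the third variation as an integral over the connecting geodesic; the paper integrates by parts using the Jacobi equation so that \eqref{simple2ndpartial} and \eqref{thirdvariation} are pure boundary expressions at $t=0$. That reduction is not cosmetic: the boundary form isolates the single term $|D_t W_0^\perp|_g^2\langle V_0,\dot\sigma_0\rangle$ that survives in the degenerate case, which an integral formula would obscure. Second, rather than the $\|F\|_{L^4}^4=\||F|^2\|_{L^2}^2$ bilinear expansion over pairs $(\alpha_1,\alpha_2)$, the paper does a single $TT^*$ to reduce to an $L^{4/3}(\gamma)\to L^4(\gamma)$ bound on the kernel, bounds each $K_\alpha$ on $L^2\to L^2$ via a Phong--Stein-type lemma (Lemma \ref{T:oiolemma}, which is exactly where the dichotomy between $\partial_{rs}^2\phi$ and $\partial_{rss}^3\phi$ enters), interpolates with the trivial $L^1\to L^\infty$ bound, and then sums. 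Your bilinear route might be made to work but is more entangled, and in any case the oscillatory-integral engine still needs the dichotomized lower bounds on the second and third mixed partials, which your proposal does not supply.
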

\begin{theorem}\label{T:maintheorem3D}
Let $(M,g)$ be a compact Riemannian manifold with constant sectional curvatures equal to $-1$ and $\dim(M)=3$.  Then for any $\epsilon >0$, there is a constant $C_\epsilon$ independent of the unit length geodesic segment $\gamma$ such that
\begin{equation}\label{main3Dbound}
\left\|\mathcal{R}_\gamma\circ\mathbf{1}_{[\lambda,\lambda+(\log\lambda)^{-1}]}(\sqrt{-\Delta_g})\right\|_{L^2(M) \to L^2(\gamma)} \leq C_\epsilon \frac{\lambda^{\frac 12}}{(\log\lambda)^{\frac 12-\epsilon}}, \quad \lambda \geq 1.
\end{equation}
\end{theorem}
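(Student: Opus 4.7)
The plan is to follow the $TT^*$ framework of Xi--Zhang \cite{xi2016improved} and Chen--Sogge \cite{ChenSogge}, now adapted to the three-dimensional hyperbolic setting. Fix $T = c\log\la$ with $c$ a small absolute constant, and let $\chi\in\mathscr{S}(\R)$ be even with $\hat\chi$ supported in $[-1,-1/2]\cup[1/2,1]$. A standard Sogge-type majorization reduces \eqref{main3Dbound} to the operator bound
\[
\bigl\|\mathcal{R}_\gamma\, \chi^2(T(\la-\sqrtl))\, \mathcal{R}_\gamma^*\bigr\|_{L^2(\gamma)\to L^2(\gamma)} \lesssim \la\, T^{-1+2\e}.
\]
Expressing $\chi^2(T(\la-\sqrtl))$ through the wave group and lifting to the universal cover $\wtM=\mathbb H^3$ via $\cos(t\sqrtl)(x,y)=\sum_{\alpha\in\Gamma}\cos(t\sqrtg)(\tilde x,\alpha\tilde y)$, the kernel at $(\gamma(s),\gamma(t))$ becomes a sum over $\alpha\in\Gamma$; each term is supported where $\dgt(\tildegam(s),\alpha\tildegam(t))\in [T/2,T]$ and, thanks to the odd-dimensional Huygens principle on $\mathbb H^3$, takes the schematic form $\la\, a_\alpha(s,t)\, e^{-\dgt(\tildegam(s),\alpha\tildegam(t))}\, e^{i\la\,\dgt(\tildegam(s),\alpha\tildegam(t))}$. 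Following \cite{BlairSoggeToponogov} I decompose the sum as $\Sstabg + \Soscg$ according to whether $\alpha\in\Stab$.

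The stable sum $\Sstabg$ has at most $O(1)$ non-zero summands (trivial stabilizer in the generic case; an a priori bound from the systole when $\gamma$ sits on a closed geodesic); Schur's test applied to the explicit $\mathbb H^3$ kernel yields a contribution of size $O(\la/T)$, which is acceptable. For the oscillatory sum $\Soscg$ I apply the non-degenerate bilinear oscillatory integral estimate to each $\alpha$-piece, writing $\phi_\alpha(s,t):=\dgt(\tildegam(s),\alpha\tildegam(t))$; this gives a bound of the form $\|a_\alpha\|_\infty\,(\la\, |\prtl_s\prtl_t\phi_\alpha|)^{-1/2}$ provided $|\prtl_s\prtl_t\phi_\alpha|$ is bounded below. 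The central input, and the innovation of the paper, is the coordinate-free third variation of arc length formula expressing $\prtl_s\prtl_t\dgt(\tildegam(s),\alpha\tildegam(t))$ as a bilinear pairing of $\dot{\tildegam}(s)$ and $\alpha_*\dot{\tildegam}(t)$ against a geometric tensor built from Jacobi fields along the minimizing segment connecting the two points; specialized to $\mathbb H^3$ this produces $|\prtl_s\prtl_t\phi_\alpha|\gtrsim e^{-\dgt(\tildegam(s),\alpha\tildegam(t))}$, up to factors measuring the angle and twist between the skew lifts $\tildegam$ and $\alpha\tildegam$.

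The principal obstacle is precisely this mixed-partial lower bound in the 3D skew-line geometry: two lifts in $\mathbb H^3$ generically share a common perpendicular carrying a twist parameter that does not appear in the 2D case handled by \cite{xi2016improved}, so one must verify that this twist does not degenerate the stationary-phase estimate. Once the lower bound is in hand, the $\alpha$-contributions are summed over dyadic scales $R\in[1,T]$ of $\dgt$ using the hyperbolic lattice and tube-counting estimates; the interplay between the $e^{-\dgt}$ kernel decay, the $e^{+\dgt/2}\la^{-1/2}$ stationary-phase gain, and the $e^{2R}$-type lattice growth in $\mathbb H^3$ balances, after careful bookkeeping, to yield the target $O(\la/T)$, up to a $\operatorname{poly}(\log\la)$ slack from summing the $\log T$ dyadic scales and from the tube counts. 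That polynomial slack is absorbed by the factor $T^{2\e}=(\log\la)^{2\e}$, and is the source of the $\e$-loss in \eqref{main3Dbound} which prevents a clean $(\log\la)^{-1/2}$ gain.
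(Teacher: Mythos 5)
Your proposal breaks down at the critical step. You claim that the third variation formula yields a uniform lower bound $|\prtl_s\prtl_t\phi_\alpha|\gtrsim e^{-d}$, and you then feed this into the standard $L^2$ oscillatory integral bound $\|a\|_\infty(\la|\prtl_s\prtl_t\phi_\alpha|)^{-1/2}$. But no such uniform lower bound on the \emph{second} mixed partial holds: the entire reason the paper develops a third variation formula is that $\prtl_{rs}^2\phi$ can be arbitrarily small at points where $\dot\eta(r_0)$ is nearly parallel to the minimizing geodesic $\sigma$. The actual mechanism is a Phong--Stein/Van der Corput dichotomy (Lemma \ref{T:oiolemma}): one must show that whenever $|\prtl_{rs}^2\phi|$ is small, $|\prtl_{rss}^3\phi|$ is bounded below exponentially in $T$. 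Your plan, as written, would fail for any $\alpha$ giving a near-degenerate mixed Hessian. You also conflate orders of differentiation in the same sentence, describing a ``third variation'' formula as computing $\prtl_s\prtl_t d$.

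Moreover, the 3D-specific difficulties are entirely absent from your proposal, and these are precisely where the constant curvature hypothesis enters. First, unlike in 2D, a small value of $\langle V_0^\perp,D_tW_0^\perp\rangle$ does not force $|V_0^\perp|$ to be small, because the orthogonal complement of $\dot\sigma$ is 2-dimensional. The paper (Lemma \ref{T:mixedpartial3D}) resolves this by observing that in $\mathbb H^3$ the triangle $\eta(r_0)$, $\gamma(s)$, $\sigma(\rho_0)$ lies in a totally geodesic 2-surface $N$, so $D_tW_0^\perp$ and $(D_s^2\prtl_t\Psi)^\perp$ are parallel as both are multiples of $Q_0$ tangent to $N$; this is exactly what fails for variable curvature, and you do not address it. Second, the 3D dichotomy only holds under the auxiliary hypothesis $|\prtl_r\phi|\geq e^{-4T}$, so one must separately control the set of $(r,s)$ where both $|\prtl_r\phi|$ and $|\prtl_s\phi|$ are small; the paper's Lemma \ref{T:isolatedlemma} shows such points are confined to a small square and the amplitude is cut off there, costing a polynomial-in-$\log\la$ factor that is the true source of the $\e$ loss, not dyadic summation slack as you suggest. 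Finally, your claim that the stable sum has $O(1)$ nonzero terms is incorrect; the paper's $\Gamma_{\Rtube}$ decomposition produces $O(2^k)$ translates at dyadic distance $2^k$ and the bound is recovered by summing the geometric decay, not by finiteness.
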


We remark that by rescaling the metric, Theorem \ref{T:maintheorem2D} applies to any nonpostively curved manifold and Theorem \ref{T:maintheorem3D} applies to any $(M,g)$ with constant negative sectional curvatures, not just those equal to $-1$.

In \S\ref{S:reductions}, we review the standard reductions in the proofs of Theorems \ref{T:maintheorem2D} and \ref{T:maintheorem3D}, which regularizes the spectral window $\mathbf{1}_{[\lambda,\lambda+(\log\lambda)^{-1}]}(\sqrt{-\Delta_g})$ and writes it as an operator valued integral involving the wave kernel. This can then be analyzed by lifting to the universal cover $(\wtM,\tilde g)$.  As in \cite{ChenSogge} and \cite{xi2016improved} matters are then reduced to obtaining estimates on the mixed partials of the Riemannian distance function on $\wtM$ restricted to geodesic segments.  In contrast to previous works, we prove the crucial bounds on this restricted distance function in a coordinate free fashion, taking a variation through geodesics and expressing the relevant partial derivatives bounds in terms of Jacobi fields in \S\ref{S:variation}.  Finally, \S\ref{S:3D} treats the special considerations for 3 dimensions, concluding the proof of Theorem \ref{T:maintheorem3D}.

\subsection*{Acknowledgements} The author thanks David Blair for taking an early look at the geometric aspects of this work in \S3. The author was supported in part by the National Science Foundation grant DMS-1301717.

\subsection*{Notation} For nonnegative numbers $A,B$, $A \lesssim B$ means that $A \leq CB$ for some constant $C$ which is uniform in that it depends only on the manifold under consideration.  In the discussions relating to Riemannian geometry, the notation draws largely from \cite{LeeRiemannian}.  In particular, given tangent vectors $X,Y$ at a point $p$ in $M$ or its universal cover, $\langle X,Y \rangle$ denotes the inner product of the two vectors determined by the metric tensor and $|X|_g$ denotes the length $\sqrt{\langle X, X\rangle}$.  Other relevant notation is reviewed in \S\ref{S:variation}.

\section{Preliminary reductions}\label{S:reductions}
Throughout the work we fix
\[
T= \log \lambda,
\]
and let $p_n$ denote the critical exponent, that is, $p_2 = 4$, $p_3 =2$.  Let $\zeta \in \mathcal{S}(\RR)$ be an even, real valued function satisfying $\zeta(\tau)>0$ for $|\tau|\leq 1$, with $\supp(\widehat{\zeta}) \subset [-\frac 12, \frac 12]$. Then define $\zeta(T(\lambda-\sqrt{-\Delta_g}))$ by the functional calculus and observe that this is invertible on the range of the spectral projector $\mathbf{1}_{[\lambda,\lambda+(\log\lambda)^{-1}]}(\sqrt{-\Delta_g})$ with
\[
\left\|\zeta(T(\lambda-\sqrt{-\Delta_g}))^{-1}\circ\mathbf{1}_{[\lambda,\lambda+(\log\lambda)^{-1}]}(\sqrt{-\Delta_g})\right\|_{L^2(M) \to L^2(M)} \lesssim 1.
\]
It thus suffices to prove Theorems \ref{T:maintheorem2D} and \ref{T:maintheorem3D} with the spectral projector replaced by $\zeta(T(\lambda-\sqrt{-\Delta_g}))$. Define $\chi(\tau) := |\zeta(\tau)|^2 \in \mathcal{S}(\RR)$.  A standard $TT^*$ duality argument shows that the desired bounds are equivalent to
\begin{equation}\label{maindualityLp}
\left\|\mathcal{R}_\gamma\circ\chi(T(\lambda-\sqrt{-\Delta_g}))\circ\mathcal{R}_\gamma^*\right\|_{L^{p_n'}(\gamma) \to L^{p_n}(\gamma)} \lesssim \frac{\lambda^{\frac {n-1}2}}{(\log\lambda)^{\delta_n}},
\end{equation}
where $p_n$ is as above and $\delta_2 = \frac 12$, $\delta_3 = 1-\epsilon$ for any fixed $\epsilon>0$.

Given the results in \cite{ChenSogge}, \cite{xi2016improved}, \cite{SoggeZelditchL4} at any two points $p,q \in M$ the kernel of $\chi(T(\lambda-\sqrt{-\Delta_g}))(p,q)$ is now well understood as an operator valued integral
\begin{multline*}
\chi(T(\lambda-\sqrt{-\Delta_g}))(p,q) = \frac{1}{2\pi T}\int e^{i\tau\lambda} \widehat{\chi}(\tau/T)e^{-i\tau\sqrt{-\Delta_g}}(p,q)\,d\tau\\
= \frac{1}{\pi T}\int e^{i\tau\lambda} \widehat{\chi}(\tau/T)\cos\left(\tau\sqrt{-\Delta_g}\right)(p,q)\,d\tau + \tilde{\chi}(T(\lambda+\sqrt{-\Delta_g}))(p,q)
\end{multline*}
where $\tilde{\chi}(T(\lambda+\sqrt{-\Delta_g}))$ is an elliptic function whose kernel is $O(\lambda^{-N})$ for any $N$ and hence can be neglected in the analysis below.  Note that the kernel of $\mathcal{R}_\gamma\circ\chi(T(\lambda-\sqrt{-\Delta_g}))\circ\mathcal{R}_\gamma^*$ as a function of $(r,s) \in [0,1]^2$ is simply $\chi(T(\lambda-\sqrt{-\Delta_g}))(\gamma(r),\gamma(s))$. Now define
\begin{align*}
K_1(p,q) &:= \frac{1}{\pi T}\int e^{i\tau\lambda} (1-\beta)(\tau) \widehat{\chi}(\tau/T)\cos\left(\tau\sqrt{-\Delta_g}\right)(p,q)\,d\tau,\\
K_0(p,q) &:= \chi(T(\lambda-\sqrt{-\Delta_g}))(p,q)-K_1(p,q)
\end{align*}
where $\beta(\tau)=0$ for $|\tau| \geq \text{inj}(M)/2$.   By appealing to results on the local analysis of small time wave kernels (see the comments in \cite[p.442]{ChenSogge} for the $n=2$ case and Theorem 1.1 in \cite{ChenSogge} for $n=3$) it is known that the contribution of $K_0$ satisfies the stronger bound
\begin{equation}\label{K0}
\left\|\int K_0(\gamma(r),\gamma(s))f(s)\,ds\right\|_{L^{p_n}_r(0,1)}
\lesssim\frac{\lambda^{\frac {n-1}2}}{\log\lambda} \,\|f\|_{L^{p_n'}_s(0,1)}.
\end{equation}

The kernel of $K_1(p,q)$ can be analyzed by lifting to the universal cover $\wtM$ of $M$ where $\wtM$ is endowed with the pullback metric $\tilde g$ under the covering map $\pi: \wtM \to M$.  By the Cartan-Hadamard theorem, $\wtM$ is diffeomorphic to $\RR^n$ and the diffeomorphism can be taken as $\exp_p: T_p M \cong \RR^n \to \wtM$ for any $p\in M$.  When $M$ has constant curvatures equal to $-1$, $(\wtM,\tilde{g})$ is isometric to hyperbolic space $\mathbb{H}^n$ (with the same curvature normalization).    We now fix a fundamental domain $D\subset \wtM$.  Let $\Gamma$ denote the group of deck transformations, the diffeomorphisms $\alpha: \wtM \to \wtM$  satisfying $\pi \circ \alpha = \pi$.  Given any $p,q \in M$, let $\tilde p,\tilde q \in D$ be the unique points such that $\pi(\tilde p)=p$, $\pi(\tilde q)=q$. As observed in \cite{SoggeZelditchL4}, we have the following formula which relates the wave kernel on $(M,g)$ to the wave kernel $\cos(\tau\sqrt{-\Delta_{\tilde g}})$ on $(\wtM, \tilde g)$:
\begin{equation}\label{cosinepropagator}
\cos\left(\tau\sqrt{-\Delta_g}\right)(p,q) = \sum_{\alpha \in \Gamma}\cos\left(\tau\sqrt{-\Delta_{\tilde g}}\right) (\alpha(\tilde p),\tilde q).
\end{equation}
By hyperbolic lattice point counting results and finite speed of propagation, it is known that there are at at most $O(e^{C\tau})$ nonzero terms in this sum.  In particular, when $\wtM$ is 3 dimensional hyperbolic space, we can take $C=2$.

We further denote $\tilde \gamma$ as the geodesic in $\wtM$ satisfying $\pi \circ \tilde\gamma(r) = \gamma(r)$.
This reduces us to considering considering kernels defined by
\[
K_\alpha(r,s) := \frac{1}{\pi T}\int_{-T}^T e^{i\tau\lambda} (1-\beta)(\tau) \widehat{\chi}\left(\frac{\tau}{T}\right)\cos\left(\tau\sqrt{-\Delta_{\tilde g}}\right)(\alpha(\tilde\gamma(r)),\tilde\gamma(s))\,d\tau,
\]
and in particular, we have $K_1 (\gamma(r),\gamma(s))= \sum_{\alpha \in \Gamma}K_\alpha(r,s)$.  Note that the restricted limits of integration are a consequence of the finite propagation speed of the wave kernel.  Moreover, there are at most $O(e^{CT})$ nonzero terms in the right hand side of this identity (again with $C=2$ in the setting of the $n=3$ theorem).  As observed in \cite{xi2016improved}, when $\alpha$ is the identity, the kernel $K_{Id}(r,s)$ can be treated in a fashion similar to \eqref{K0}, so we may restrict attention to the remaining terms.  Now set
\[
\phi_\alpha(r,s) := d(\alpha(\tilde\gamma(r)),\tilde\gamma(s))
\]
where $d(\cdot,\cdot)$ denotes Riemannian distance on $(\wtM,\tilde g)$.  Again by finite propagation speed, we may restrict attention to cases where $\phi_\alpha(r,s) \leq T$.

We now appeal to Lemma 3.1 in \cite{ChenSogge}, which shows that\footnote{There are harmless notational differences between this display and those in \cite{ChenSogge}, including dependence of various functions on $\gamma, \lambda, T$ and the normalization of $a_\pm$.}
\begin{equation}\label{asymptotics}
K_\alpha(r,s) = \frac{\lambda^{\frac {n-1}2}}{T(\phi_\alpha(r,s))^{\frac{n-1}{2}}}\sum_\pm a_{\pm,\alpha}(r,s)e^{\pm i\lambda \phi_\alpha(r,s)} + R_\alpha(r,s)
\end{equation}
where $|R_\alpha(r,s)| \lesssim e^{CT}$ for some uniform $C$, and
$
\|a_{\pm,\alpha}\|_{C^1([0,1]^2)} \lesssim 1.
$
This lemma is a consequence of the Hadamard parametrix for $\cos(\tau\sqrt{-\Delta_{\tilde g}})$ (see e.g. \cite{SoggeHangzhou}) and stationary phase.  In what follows, we harmlessly neglect the contribution of $R_\alpha(r,s)$ as the bound $e^{CT} = (\log \lambda)^C \lesssim \lambda^{\frac {n-1}2}/\log\lambda$ means the contribution of  $\sum_\alpha|R_\alpha(r,s)|$ satisfies stronger bounds.

As observed in \cite{BlairSoggeToponogov}, the following proposition is a consequence of the Toponogov comparison theorem.  In what follows, $\nabla_1 d(p,q)$ denotes the Riemannian gradient of the distance function in the first variable, which gives the unit tangent vector at $p$ of the geodesic from $q$ to $p$.

\begin{proposition}\label{T:toponogovprop}
Suppose $(\wtM,\tilde g)$, $\dim(\wtM)\geq 2$, is a simply connected manifold such that the sectional curvatures of are nonpositive and bounded below by $-1$.  Suppose that $\tilde \gamma$ is a geodesic with $\tilde\gamma(0)=p$. Given $T,\theta>0$, let $C(\theta;T)$ denote the intersection of the geodesic ball $B(p,T)$ in $\wtM$ with the cone of aperture $\theta$ about $\gamma$ with vertex $p$.  That is, $C(\theta;T)$ is equal to
\[
\{q \in \wtM: d(p,q) \leq T \text{ and } \angle(\dot{\tilde{\gamma}}(0),\pm\nabla_1 d(p,q)) \leq \theta \text{ for some choice of $\pm$}\}.
\]
Now fix $R>0$ and denote $\Rtube:=\{q \in \wtM: d(q,\tilde\gamma) \leq R\}$.  Then for $T$ sufficiently large, if $\theta_T$ is defined by $\sin(\theta_T/2) = \frac{\sinh (R/2)}{\sinh T}$ then
\[
C(\theta_T;T) \subset \Rtube.
\]
\end{proposition}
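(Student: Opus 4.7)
The plan is to exploit the lower sectional curvature bound $-1$ via Toponogov's hinge comparison theorem, reducing the claim to a direct computation in the hyperbolic plane $\mathbb{H}^2$. Given $q \in C(\theta_T;T)$, set $L := d(p,q) \leq T$ and let $\alpha \leq \theta_T$ denote the angle at $p$ between $\dot{\tilde\gamma}(0)$ (after possibly reversing the parametrization of $\tilde\gamma$, which leaves $\Rtube$ unchanged) and the initial direction of the minimizing unit-speed geodesic from $p$ to $q$. The natural candidate for a point on $\tilde\gamma$ within distance $R$ of $q$ is $\tilde\gamma(L)$, the endpoint after running $\tilde\gamma$ through the same arc length.

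First, I would apply Toponogov's hinge theorem to the hinge based at $p$ formed by the two geodesic arcs of length $L$—one along $\tilde\gamma$ terminating at $\tilde\gamma(L)$, the other the minimizing geodesic to $q$—meeting at angle $\alpha$. Since the sectional curvatures of $\tilde g$ satisfy $K \geq -1$, the theorem yields
\[
d(q, \tilde\gamma(L)) \leq \bar d,
\]
where $\bar d$ is the third side of the corresponding isoceles hinge in $\mathbb{H}^2$ with two sides of length $L$ meeting at angle $\alpha$.

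Next, I would compute $\bar d$ using the hyperbolic law of cosines, $\cosh \bar d = \cosh^2 L - \sinh^2 L \cos \alpha$. Applying the identities $\cosh x - 1 = 2\sinh^2(x/2)$ and $1-\cos\alpha = 2\sin^2(\alpha/2)$ collapses this to the clean relation
\[
\sinh\!\bigl(\tfrac{\bar d}{2}\bigr) = \sinh(L)\,\sin\!\bigl(\tfrac{\alpha}{2}\bigr).
\]
Using $L \leq T$, $\alpha \leq \theta_T$, and the prescribed relation $\sin(\theta_T/2) = \sinh(R/2)/\sinh T$, the right side is bounded by $\sinh(R/2)$, so $\bar d \leq R$. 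Combining with the previous step, $d(q,\tilde\gamma) \leq d(q,\tilde\gamma(L)) \leq R$, placing $q$ in $\Rtube$.

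The hypothesis that $T$ is sufficiently large is only used to ensure that $\sinh T \geq \sinh(R/2)$, so that $\theta_T$ is well-defined as an angle in $(0,\pi)$. No genuine obstacle arises: the half-angle form of the hyperbolic isoceles distance identity is engineered precisely to match the formula for $\theta_T$ in the statement, and Toponogov's hinge theorem has no triangle-size restriction when the comparison space is $\mathbb{H}^2$ (which has no conjugate points).
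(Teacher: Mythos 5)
Your proof is correct and takes exactly the route the paper indicates: the statement is attributed to the Toponogov hinge comparison (the paper itself cites \cite{BlairSoggeToponogov} rather than reproving it), and your argument is the standard one. The application of the hinge theorem (curvature $\geq -1$ gives an upper bound on the third side via the $\mathbb{H}^2$ model), the half-angle reduction of the hyperbolic law of cosines to $\sinh(\bar d/2)=\sinh L\,\sin(\alpha/2)$, and the monotonicity step are all sound; the handling of the $\pm$ ambiguity by reversing the orientation of $\tilde\gamma$ is the right way to read the definition of $C(\theta;T)$, and since $\wtM$ is Cartan--Hadamard all geodesics are minimizing, so the hinge hypotheses are automatic.
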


We now fix $R$ sufficiently large so that $\theta_T \geq e^{-T}$ for every $T$.  As in \cite{xi2016improved}, we now set
\[
\Gamma_{{\mathcal T}_R(\tilde \gamma)}  = \left\{\alpha\in \Gamma: \alpha(D) \cap \Rtube \neq \emptyset\right\}
\]
and split the sum
\begin{equation}\label{staboscdecomp}
\sum_\alpha K_\alpha(r,s) = \sum_{\alpha \in \Gamma_{{\mathcal T}_R(\tilde \gamma)}}K_\alpha(r,s)+\sum_{\alpha \notin \Gamma_{{\mathcal T}_R(\tilde \gamma)}}K_\alpha(r,s).
\end{equation}
The first sum here captures the contributions of geodesic segments $\alpha(\tilde \gamma(r))$ that are in some sense close to the full geodesic $\tilde\gamma(s)$, $s \in \RR$ in the cotangent bundle.  As will be shown (following \cite{ChenSogge}, \cite{xi2016improved}), the second sum satisfies better bounds; it can be viewed as capturing the microlocal contributions of the kernel of $R_\gamma\circ\chi(T(\lambda-\sqrt{-\Delta_g}))\circ R_\gamma^*$ which are in some sense ``transverse" to the geodesic segment.  This seems to give at least a partial explanation as to why critical $L^p$ bounds on restrictions to geodesics have been a more forgiving problem than the analogous one over the full manifold: since the cotangent bundle $T^*\gamma$ is a submanifold of smaller dimension than $T^*M$, there is a microlocally transverse contribution to the relevant operators which is better behaved.  For critical $L^p(M)$ bounds, it appears to be difficult to similarly decompose the kernel of $\chi(T(\lambda-\sqrt{-\Delta_g}))$.

We recall the approach in \cite{xi2016improved} (which follows \cite{ChenSogge}, \cite{BlairSoggeToponogov}) used to handle the first sum in \eqref{staboscdecomp}.  As observed in \cite{BlairSoggeToponogov}, we have that $\phi_\alpha(r,s) \approx d(p,\alpha(p))$ where $p = \tilde\gamma(0)$ for all but finitely many $\alpha$ and that
\[
\#\{\alpha \in \Gamma_{{\mathcal T}_R(\tilde \gamma)}: d(p,\alpha(p)) \in [2^k,2^{k+1}] \} = O(2^k).
\]
The latter property is a consequence of the fact that for any geodesic ball of the fixed radius $R$, there are at most $O(1)$ translates of the $\alpha(D)$ of $D$ that can intersect an arbitrary ball of radius $R$.  But this shows that
\begin{equation}\label{stabbd}
\left|\sum_{\alpha \in \Gamma_{{\mathcal T}_R(\tilde \gamma)}}K_\alpha(r,s)\right| \lesssim \frac{\lambda^{\frac {n-1}2}}{T}\sum_{1 \lesssim 2^k \leq T} 2^{k}2^{-k(\frac{n-1}{2})} + e^{CT}.
\end{equation}
Hence by geometric summation and Young's inequality the contribution of this sum to \eqref{maindualityLp} is indeed $O(\lambda^{(n-1)/2}/(\log\lambda)^{\delta_n})$.

The crux of the proofs of Theorems \ref{T:maintheorem2D} and \ref{T:maintheorem3D} is to bound the contribution of the second sum in \eqref{staboscdecomp}.  In the rest of the paper, we restrict attention to the $\alpha \in \Gamma$ such that $\phi_\alpha(r,s)$ is sufficiently large for all $(r,s) \in [0,1]^2$.  This can be done by including the finite collection of remaining terms in the sum \eqref{stabbd}, which does not affect the validity of that bound.

We now outline the rest of the proof of Theorem \ref{T:maintheorem2D}; our three dimensional result has some special considerations which we postpone until \S\ref{S:3D}.

\subsection{Conclusion of the proof of Theorem \ref{T:maintheorem2D}}
The remaining ingredients are the following geometric lemmas, extending the bounds in \cite[Lemmas 5,6]{xi2016improved} in the work of Xi and Zhang to the more general setting of nonconstant nonpositive curvature.  We will prove them in \S \ref{S:variation}.  To state them, we let $(\wtM,\tilde{g})$ be the universal cover of a manifold with sectional curvatures pinched between -1 and 0, let $\gamma,\eta:[0,1] \to \wtM$ be disjoint unit speed geodesic segments (since we have no need to distinguish between $\tilde\gamma$ and $\gamma$ below), and define $\phi(r,s) = d(\gamma(s),\eta(r))$.  The two lemmas establish exponential lower and upper bounds on the mixed partials of $\phi$ respectively.
\begin{lemma}\label{T:mixedpartial}  Let $\wtM$, $\phi$ be as above and suppose $\dim(M)=2$.  Fix $(r_0,s_0)\in [0,1]^2$.  Denote $\rho_0 = d(\gamma(s_0),\eta(r_0))$ and suppose further that $3 \leq \rho_0 \leq T$ (so that $\gamma([0,1])\cap \eta([0,1])=\emptyset$) and $\angle (\nabla_1 d(\gamma(s_0),\eta(r_0)),\pm\dot\gamma(s_0)) \geq e^{-T}$ for both choices of $\pm$.  Then there exist uniform constants $C_i>0$, $i=1,2,3$ such that for $T$ sufficiently large, we have
\begin{equation}\label{thirdpartiallower}
|\prtl_{rss}^3\phi(r_0,s_0)| \geq e^{-C_1T},
\end{equation}
whenever
\begin{equation}\label{anglehyp}
\angle(\dot\eta(r_0),\pm\dot\sigma(0)) < e^{-C_2T}.
\end{equation}
for some choice of $\pm$.  If \eqref{anglehyp} fails to hold, we then have the lower bound
\begin{equation}\label{secondpartiallower}
|\prtl_{rs}^2\phi(r_0,s_0)| \geq e^{-C_3T}.
\end{equation}
\end{lemma}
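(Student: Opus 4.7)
The plan is to set up a smooth two-parameter family $\Sigma(t, r, s)$ of minimizing geodesics from $\gamma(s)$ to $\eta(r)$, with central geodesic $\sigma := \Sigma(\cdot, r_0, s_0)$ of length $\rho_0$, and to express the mixed partials of $\phi$ via Jacobi fields along $\sigma$. Let $\theta_1 \in (0, \pi)$ denote the angle at $\gamma(s_0)$ between $\dot\gamma(s_0)$ and $\dot\sigma(0)$, and $\theta_2 \in (0, \pi)$ the corresponding angle at $\eta(r_0)$ between $\dot\eta(r_0)$ and $\dot\sigma(\rho_0)$. The hypothesis on $\angle(\nabla_1 d, \pm\dot\gamma(s_0))$ forces $\sin\theta_1 \gtrsim e^{-T}$. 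Since $\dim\wtM = 2$, the perpendicular parts of Jacobi fields along $\sigma$ reduce to scalar solutions of $y'' + K(\sigma(t))\, y = 0$; in particular, let $h$ denote the solution with boundary values $h(0) = \sin\theta_1$, $h(\rho_0) = 0$, representing the perpendicular component of the Jacobi field $X = \partial_s \Sigma$ generating the $s$-variation. Rauch-type comparison under $-1 \leq K \leq 0$ then yields $|h'(\rho_0)| \geq \sin\theta_1/\sinh(\rho_0) \gtrsim e^{-2T}$, which will be the principal analytic input for both parts of the lemma.

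Applying the first variation of arc length gives $\partial_r\phi(r, s) = \cos\theta_2(r, s)$. Differentiating in $s$, together with the identity $\nabla_s \dot\sigma_{r_0, s}(\rho_0)|_{s_0} = h'(\rho_0)\, N(\rho_0)$ (obtained from the second variation and the Jacobi equation above, with $N$ a parallel unit normal along $\sigma$), produces the clean expression
\begin{equation*}
\partial_{rs}^2 \phi(r_0, s_0) \;=\; \varepsilon\, h'(\rho_0)\, \sin\theta_2, \qquad \varepsilon \in \{\pm 1\}.
\end{equation*}
When \eqref{anglehyp} fails, so that $\sin\theta_2 \geq e^{-C_2 T}$, multiplying the bounds on $|h'(\rho_0)|$ and $\sin\theta_2$ immediately gives \eqref{secondpartiallower} with $C_3 := C_2 + 3$. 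When instead \eqref{anglehyp} holds, I differentiate once more in $s$. The same Jacobi computation also yields $\partial_s \theta_2 = -\varepsilon\, h'(\rho_0)$, so the product rule gives
\begin{equation*}
\partial_{rss}^3 \phi(r_0, s_0) \;=\; -\cos\theta_2\, (h'(\rho_0))^2 \;+\; \varepsilon\, \sin\theta_2\, \partial_s h'(\rho_0).
\end{equation*}
Under \eqref{anglehyp} we have $|\cos\theta_2| \geq 1/2$, so the first term has magnitude at least $\tfrac{1}{2}(h'(\rho_0))^2 \gtrsim e^{-4T}$. Provided $|\partial_s h'(\rho_0)| \lesssim e^{C T}$ for a uniform constant $C$, the error is $\lesssim e^{(C - C_2)T}$, which becomes negligible once $C_2$ is chosen larger than, say, $C + 5$; this yields \eqref{thirdpartiallower} with $C_1 = 5$.

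The main obstacle is the estimate $|\partial_s h'(\rho_0)| \lesssim e^{CT}$, which is a genuine third-variation quantity: the Jacobi equation coefficients, the boundary value $h(0) = \sin\theta_1(s)$, the endpoint $\rho_0(s)$, and the curvature $K$ along the moving base geodesic $\Sigma(\cdot, r_0, s)$ all depend on $s$. This is exactly where the coordinate-free third variation formula developed in \S\ref{S:variation} plays its central role, packaging $\partial_s h'(\rho_0)$ into an expression involving Jacobi fields along $\sigma$, the second-order variation field $\nabla_s\partial_s \Sigma$, and curvature integrals, whose magnitude is controlled by $e^{CT}$ under the pinched curvature hypothesis through standard growth bounds for linear ODEs with coefficients bounded in $|K|$. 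Once this formula and its estimate are in hand, the dichotomy and algebraic case split above complete the proof.
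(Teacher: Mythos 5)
Your proposal is correct and follows essentially the same route as the paper: it uses the variation through geodesics, reduces the perpendicular Jacobi field to a scalar $h$ in 2D, applies Rauch comparison to get $|h'(\rho_0)|\gtrsim e^{-2T}$, and obtains the second and third mixed partials by differentiating the first variation $\partial_r\phi=\cos\theta_2$. Your flagged obstacle, the bound $|\partial_s h'(\rho_0)|\lesssim e^{CT}$, is exactly what the paper supplies via Lemma \ref{T:iteratedcovariantlemma} and the formula \eqref{thirdvariation} (your term $\varepsilon\sin\theta_2\,\partial_s h'(\rho_0)$ packages the first two terms of \eqref{thirdvariation}, while $-\cos\theta_2\,(h'(\rho_0))^2$ is the third), so the plan is consistent with the paper's argument.
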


\begin{lemma}\label{T:expupperbd}
Let $\wtM$, $\phi$ be as above, but now allow $\dim(M)\geq 2$. Then for $k\geq 2$, there exists a uniform constant $C_k >1$ (distinct from those in Lemma \ref{T:mixedpartial}) such that
\[
\sum_{|\alpha| =k } |\prtl^\alpha \phi(r_0,s_0)| \leq C_k e^{C_kT}.
\]
\end{lemma}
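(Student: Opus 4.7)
The plan is to express each mixed partial $\partial^\alpha\phi(r_0,s_0)$ as a polynomial in Jacobi fields along the minimizing geodesic from $\eta(r_0)$ to $\gamma(s_0)$, and then to invoke Jacobi comparison against the constant curvature $-1$ model to bound every factor by something $\lesssim e^{\rho_0}\le e^T$.

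Because $(\wtM,\tilde g)$ is Cartan--Hadamard and the segments $\gamma,\eta$ are disjoint, for each $(r,s)$ in a neighborhood of $(r_0,s_0)$ there is a unique unit speed minimizing geodesic $\sigma_{r,s}\colon[0,\phi(r,s)]\to\wtM$ from $\eta(r)$ to $\gamma(s)$, and this family depends smoothly on $(r,s)$. Writing $\sigma=\sigma_{r_0,s_0}$ and $\rho_0=\phi(r_0,s_0)$, the first variation formula gives
\[
\partial_s\phi(r_0,s_0)=\langle\dot\gamma(s_0),\dot\sigma(\rho_0)\rangle,\qquad \partial_r\phi(r_0,s_0)=-\langle\dot\eta(r_0),\dot\sigma(0)\rangle,
\]
and differentiating these identities repeatedly is the starting point. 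Each additional derivative in $r$ or $s$ introduces a covariant derivative of $\dot\sigma_{r,s}$ at an endpoint; this is exactly the covariant derivative of a variation field of the two-parameter family $\{\sigma_{r,s}\}$, and such variation fields are Jacobi fields along $\sigma$ whose boundary data are assembled from $\dot\gamma,\dot\eta$ and from lower-order Jacobi fields. Iterating, I obtain a universal polynomial expression for $\partial^\alpha\phi(r_0,s_0)$ in the four unit vectors $\dot\gamma(s_0),\dot\eta(r_0),\dot\sigma(0),\dot\sigma(\rho_0)$, in Jacobi fields $J$ along $\sigma$ together with their covariant derivatives $\nabla_t^jJ$ evaluated at the endpoints, and in the Riemann tensor of $\tilde g$ together with its covariant derivatives. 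The coordinate free third variation formula developed in \S\ref{S:variation} supplies the template for $|\alpha|=3$, and higher orders follow by iterating the same differentiation.

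The pointwise estimate then drops out of Jacobi comparison. Since the sectional curvatures of $\tilde g$ are bounded below by $-1$, Rauch's theorem yields the crude bound
\[
|J(t)|+|\nabla_tJ(t)|\le C\bigl(|J(0)|+|\nabla_tJ(0)|\bigr)\cosh t\le Ce^T,\qquad 0\le t\le\rho_0\le T,
\]
for every Jacobi field $J$ along $\sigma$ with $O(1)$ initial data. The Jacobi equation $\nabla_t^2J=-R(J,\dot\sigma)\dot\sigma$, combined with uniform $C^j$ bounds on the Riemann tensor of $\tilde g$ (which descend from compactness of $M$), bootstraps this to analogous bounds $|\nabla_t^jJ(t)|\le C_je^T$ for any finite $j$. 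Substituting these into the polynomial expansion above, and noting that a derivative of $\phi$ of order $k$ contains at most $O(k)$ Jacobi field factors, yields the target bound $C_ke^{C_kT}$ with $C_k=O(k)$.

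The main obstacle is the bookkeeping in the recursive step: one must confirm that each successive variation field is truly a Jacobi field whose boundary data are controlled by data from earlier variations, rather than a solution of a forced Jacobi equation with unbounded inhomogeneity. This is precisely what the coordinate free formulas of \S\ref{S:variation} are designed to deliver; once they are in hand, the exponential estimate is an immediate consequence of Rauch comparison and the uniform bounds on the curvature tensor and its covariant derivatives on the cover.
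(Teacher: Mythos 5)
Your high-level plan---differentiate the length functional, express the result in terms of variation fields, bound those by Jacobi comparison---points in the right direction, but the central structural claim is wrong. You write that ``such variation fields are Jacobi fields along $\sigma$ whose boundary data are assembled from $\dot\gamma,\dot\eta$ and from lower-order Jacobi fields.'' Only the \emph{first}-order variation fields $\partial_r\Psi$, $\partial_s\Psi$ are Jacobi fields. The higher iterated covariant derivatives $D_r\partial_s\Psi$, $D_s^2\partial_r\Psi$, etc.\ are \emph{not}: they satisfy a nonhomogeneous Jacobi equation
\[
D_t^2 X + R(X,\partial_t\Psi)\partial_t\Psi + S = 0,
\]
with a forcing term $S$ built from curvature tensors, their covariant derivatives, and lower-order variation fields. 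You can see that these cannot be genuine Jacobi fields from a clean contradiction: by \eqref{vanishingendpoints} they vanish at \emph{both} endpoints $t=0,\rho_0$, and since the Cartan--Hadamard manifold $\wtM$ has no conjugate points, a Jacobi field vanishing at two points is identically zero---which would make all the mixed partials of order $\ge 3$ vanish, manifestly false.

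You do flag this exact issue at the end as ``the main obstacle,'' but then resolve it by asserting that the formulas of \S3 show the fields are Jacobi fields ``rather than a solution of a forced Jacobi equation.'' The actual resolution in the paper is the opposite: the fields \emph{are} solutions of forced Jacobi equations, and the point of Lemma \ref{T:iteratedcovariantlemma} is to show by induction on $k$ that the forcing $S_{\mathbf m}$ is exponentially controlled. Moreover, even granting that structure, Rauch comparison as you invoke it does not apply to forced equations; the paper instead runs a Gronwall argument. A second subtlety you do not address is that for $k\ge 2$ one only controls the (vanishing) \emph{boundary} values of $X_t$, not $D_tX_t|_{t=0}$, so the initial value problem cannot be used directly. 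The paper splits $X_t=Y_t+Z_t$, solving the forced equation with zero initial data for $Y_t$ and correcting by a homogeneous Jacobi field $Z_t$ chosen to match the boundary data, then applying the Jacobi pinch \eqref{jacobipinchbounds} to control $D_tZ_0$. Without these two ingredients---inductive control of the inhomogeneity and the BVP-to-IVP decomposition---the proof does not go through as you have sketched it.
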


Returning to the proof of Theorem \ref{T:maintheorem2D}, note that we may apply the Lemma above to the case where $\tilde\gamma(s)$ is relabeled as $\gamma(s)$ and $\alpha(\tilde\gamma(r))= \eta(r)$ to analyze the phase function $\phi_\alpha(r,s)$ when $\alpha \notin \Gamma_{{\mathcal T}_R(\tilde \gamma)}$.  Indeed, by Proposition \ref{T:toponogovprop}, if $\alpha\notin \Gamma_{{\mathcal T}_R(\tilde \gamma)}$ then for both choices of $\pm$
\[
\angle\left(\dot{\tilde{\gamma}}(s),\pm\nabla_1 d\big(\tilde{\gamma}(s),\alpha(\tilde{\gamma}(r))\big)\right) \geq e^{-T}, \qquad  \text{for all }(r,s)  \in [0,1]^2.
\]

With Lemma \ref{T:mixedpartial} in hand, we now present a bound on the oscillatory integral operators of interest here.  It is a variation on \cite[Proposition 2]{xi2016improved} and based on an observation in \cite[p.56]{PhongSteinTorsion}.
\begin{lemma}\label{T:oiolemma}
Let $T_\lambda$ be an oscillatory integral operator of the form
\begin{equation}\label{oio}
T_\lambda f(r) = \int_0^1 e^{i\lambda \phi(r,s)} a(r,s) f(s)\,ds
\end{equation}
where $a \in C^1([0,1]^2)$ and $\phi \in C^4([0,1]^2)$.  Suppose further $|\prtl_{rss}^3\phi(r,s)| \geq C$ whenever $|\prtl_{rs}^2\phi(r,s)| \leq C$ and that $A\geq\max(\|\prtl_{rs}^2\phi\|_{C^2},1)$.
Then
\begin{equation}\label{tlambdabound}
\|T_\lambda f\|_{L^2(0,1)}\lesssim \lambda^{-\frac 14}C^{-1}A^{\frac 54}\|a\|_{C^1}\|f\|_{L^2(0,1)}
\end{equation}
\end{lemma}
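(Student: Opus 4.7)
\textit{Plan.} I would use a $TT^*$ argument together with van der Corput's lemma. Since $\|T_\lambda\|_{L^2\to L^2}^2 = \|T_\lambda T_\lambda^*\|_{L^2 \to L^2}$, the problem reduces to bounding the kernel
\[
K(r,r') = \int_0^1 e^{i\lambda\Psi(s;r,r')} a(r,s)\overline{a(r',s)}\,ds, \qquad \Psi(s;r,r') := \phi(r,s) - \phi(r',s),
\]
whose $s$-derivatives are $\partial_s \Psi = \int_{r'}^r \partial_{rs}^2\phi(t,s)\,dt$ and $\partial_s^2 \Psi = \int_{r'}^r \partial_{rss}^3\phi(t,s)\,dt$.

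The crucial step is to show $|\partial_s^2 \Psi(s_0)| \gtrsim C|r-r'|$ at any stationary point $s_0$ of $\Psi(\cdot;r,r')$. Indeed, $\partial_s\Psi(s_0)=0$ means the average of $\partial_{rs}^2\phi(t,s_0)$ over $t \in [r',r]$ vanishes, which combined with the Lipschitz bound $\|\partial_{rs}^2\phi\|_{C^2}\leq A$ forces $|\partial_{rs}^2\phi(t,s_0)| \leq A|r-r'|$ for every $t \in [r',r]$. When $|r-r'| \leq C/(2A)$ this is at most $C$, so the hypothesis yields $|\partial_{rss}^3\phi(t,s_0)| \geq C$, with fixed sign by continuity, and hence $|\partial_s^2\Psi(s_0)| \geq \tfrac{1}{2}C|r-r'|$. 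Combining second-order van der Corput with the trivial bound $|K| \leq \|a\|_\infty^2$ gives
\[
|K(r,r')| \lesssim \|a\|_{C^1}^2 \min\bigl(1,(\lambda C|r-r'|)^{-1/2}\bigr),
\]
and Schur's test yields $\sup_r \int_0^1 |K(r,r')|\,dr' \lesssim \|a\|_{C^1}^2 (\lambda C)^{-1/2}$ on this short-range regime, which by itself would already recover a gain of $(\lambda C)^{-1/4}$.

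The main obstacle is the complementary regime $|r-r'| > C/(2A)$, where $\partial_{rs}^2\phi(\cdot,s_0)$ can vary by more than $C$ along $[r',r]$ and the argument above fails to bound $|\partial_s^2\Psi|$ directly from below. Here I would partition the $(r,r')$ domain at the spatial scale $C/A$, using first-order van der Corput on sub-pieces where $\partial_{rs}^2\phi$ retains a definite sign on $[r',r]$ together with trivial $L^\infty$ bounds on exceptional fibers. The precise factors $C^{-1}$ and $A^{5/4}$ in the stated bound emerge from carefully tracking the amplitude $C^1$-norm and the number of spatial scales through this multi-scale decomposition, in the spirit of the Phong--Stein observation cited in the statement.
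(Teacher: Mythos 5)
Your plan correctly identifies the $TT^*$ framework, the role of the phase $\Psi(s)=\phi(r,s)-\phi(r',s)$, and the Taylor/averaging argument that converts the pointwise dichotomy on $\prtl^2_{rs}\phi$, $\prtl^3_{rss}\phi$ into a lower bound on $\prtl^2_s\Psi$ at scale $C|r-r'|$ when $|r-r'|\lesssim C/A$. But the way you organize the proof leaves a genuine gap. You form $T_\lambda T_\lambda^*$ first and then try to split the $(r,r')$ domain, which creates a ``long-range'' regime $|r-r'|>C/(2A)$ that you cannot dispatch: on those pairs there is no control over the sign or size of $\prtl_s\Psi=\int_{r'}^r\prtl^2_{rs}\phi$, so the proposed ``first-order van der Corput on sub-pieces where $\prtl^2_{rs}\phi$ retains a definite sign'' has no lower bound to feed into van der Corput and no way to bound the exceptional fibers. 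Moreover, even if you could split the kernel of $T_\lambda T_\lambda^*$ into diagonal blocks $I_k\times I_k$, the off-diagonal blocks still contribute to the operator norm of $T_\lambda T_\lambda^*$, and nothing in your argument addresses them.

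The move you are missing is to decompose the operator $T_\lambda$ itself \emph{before} dualizing: set $a_k=\ind_{I_k}(r)\,a(r,s)$ with $|I_k|\lesssim C/A$, so that $T_{\lambda,k}T_{\lambda,k}^*$ has kernel automatically supported in $I_k\times I_k$, hence $|r-r'|\lesssim C/A$ always, and the long-range regime simply never appears. Almost-disjointness of the $I_k$ then gives $\|T_\lambda\|\lesssim (A/C)^{1/2}\max_k\|T_{\lambda,k}\|$, which is where half of the $A^{5/4}C^{-1}$ factor comes from. Separately, you also need to decompose the $s$-integral into intervals $I_l$ of the same scale $C/A$: the stability of the dichotomy under Taylor expansion holds only when both $r$ and $s$ are localized to scale $C/A$, and this is what guarantees that on each $I_k\times I_k\times I_l$ either $|\prtl_s\varphi|\geq 1$ throughout or $|\prtl^2_s\varphi|\geq 1$ throughout (after the normalization $\varphi=(2/C)(\phi(r,s)-\phi(r',s))/(r-r')$), so that non-stationary/stationary phase applies with uniform constants. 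Your claim of a second-order van der Corput bound from a lower bound on $\prtl_s^2\Psi$ only at the stationary point is not directly justified without this $s$-localization, since van der Corput needs the lower bound on an interval. Summing over the $O(A/C)$ intervals $I_l$ and integrating $|r-r'|^{-1/2}$ over $|r-r'|\lesssim C/A$ then produces the remaining $A^{3/2}C^{-1}\lambda^{-1/2}$ in the $TT^*$ bound, and together with the $\sqrt{A/C}$ from the $I_k$ decomposition one recovers $\lambda^{-1/4}C^{-1}A^{5/4}$.
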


We postpone the proof of this lemma until the end of this section and show that Lemmas \ref{T:mixedpartial}, \ref{T:expupperbd}, and \ref{T:oiolemma} conclude the proof of Theorem \ref{T:maintheorem2D}.  Taking $C = \min(e^{-C_1T},e^{-C_3T})$ where $C_1,C_3$ are as in Lemma \ref{T:mixedpartial} and $A = e^{CT}$ for some constant sufficiently large we obtain that for some larger constant $C$,
\[
\left\|\int_0^1 K_\alpha(r,s)f(s)\,ds\right\|_{L^2(0,1)} \lesssim \frac{\lambda^{\frac 14}e^{CT}}{T}\|f\|_{L^2(0,1)}.
\]
Since \eqref{asymptotics} gives the bound
\[
\left\|\int_0^1 K_\alpha(r,s)f(s)\,ds\right\|_{L^\infty(0,1)} \lesssim \frac{\lambda^{\frac 12}}{T}\|f\|_{L^1(0,1)},
\]
we may interpolate to obtain an $L^{4/3}(0,1) \to L^4(0,1)$ bound.  Since the number of nonzero terms in $\Gamma_{{\mathcal T}_R(\tilde \gamma)}$ is $O(e^{CT})$, we have for some larger $C$,
\begin{multline*}
\sum_{\alpha \notin \Gamma_{{\mathcal T}_R(\tilde \gamma)} }\left\|\int_0^1 K_\alpha(r,s)f(s)\,ds\right\|_{L^{4}(0,1)} \\ \lesssim \frac{\lambda^{\frac 38}e^{CT}}{T}\|f\|_{L^{4/3}(0,1)} \lesssim \lambda^{\frac 12}(\log\lambda)^{-\frac 12}\|f\|_{L^{4/3}(0,1)}
\end{multline*}
as $e^{CT} \lesssim \lambda^{\frac 18}$.  This concludes the proof of \eqref{maindualityLp} and Theorem \ref{T:maintheorem2D}.

\begin{proof}[Proof of Lemma \ref{T:oiolemma}]
First observe that the lower bounds on $|\prtl_{rs}^2\phi|$, $|\prtl_{rss}^3\phi|$, are stable over cubes of sidelength $\frac{C}{2A}$.  Indeed, if $|\prtl_{rs}^2\phi(r_0,s_0)| \geq C$ or $|\prtl_{rss}^3\phi(r_0,s_0)| \geq C$, then for $|r-r_0|,|s-s_0| \leq \frac{C}{2A}$, a simple Taylor expansion shows that $|\prtl_{rs}^2\phi(r,s)| \geq \frac C2$ or $|\prtl_{rss}^3\phi(r,s)| \geq \frac C2$ respectively.

Now write $[0,1] = \cup_k I_k$ where $\{I_k\}_k$ is an almost disjoint union of intervals with $|I_k| \leq \frac{C}{4A}$ and the number of intervals is less than $\frac{8A}{C}$.  let
\[
a_k :=\mathbf{1}_{I_k}(r) a(r,s),
\]
so that $a=\sum_k a_k$ almost everywhere.  Thus if $T_{\lambda,k}$ denotes the oscillatory integral operator defined by replacing $a$ by $a_k$ in \eqref{oio}, we have that
\[
\|T_\lambda\|_{L^2(0,1)\to L^2(0,1)} \lesssim \sqrt{\frac AC} \max_k \|T_{\lambda,k}\|_{L^2(0,1)\to L^2(I_k)}
\]
which by duality reduces us to showing that uniformly in $k$, we have
\begin{equation}\label{dualoiobound}
\|T_{\lambda,k}T_{\lambda,k}^*\|_{L^2(I_k)\to L^2(I_k)} \lesssim \lambda^{-\frac 12}C^{-1}A^{\frac 32}\|a\|_{C^1}^2.
\end{equation}
The kernel of the operator on the left takes the form
\[
K_k(r,r') := \int_0^1e^{i\lambda (\phi(r,s)-\phi(r',s))} a_k(r,s) \overline{a_k(r',s)} \,ds
\]
For $r \neq r'$, we rewrite this as $K_k = \sum_l K_{k,l}$ almost everywhere with
\begin{gather}
K_{k,l}(r,r'):=\int e^{i\mu\varphi(r,r',s)} \mathbf{1}_{I_l}(s)  a_k(r,s) \overline{a_k(r',s)} \,ds,\label{kldefn}\\
\varphi (r,r',s) := \frac 2C \frac{\phi(r,s)-\phi(r',s)}{r-r'},\qquad \mu := \frac C2\lambda (r-r') .\notag
\end{gather}
By the observations above, we have that either $|\prtl_s \varphi (r,r',s)|\geq 1$ throughout the support of the amplitude in \eqref{kldefn} or that $|\prtl_{ss}^2\varphi (r,r',s)|\geq 1$ throughout.

We now claim that
\begin{equation}\label{kernelklbd}
|K_{k,l}(r,r')| \lesssim \mu^{-\frac 12}\|a\|_{C^1}^2(1+\|\prtl_{rss}^3\phi\|_{L^\infty}).
\end{equation}
If this holds, then using support properties of $K_{k,l}$, this yields
\begin{align*}
\int |K_{k}(r,r')| \,dr &\leq \sum_l\int_{|r-r'|\lesssim \frac{C}{4A}} |K_{k,l}(r,r')| \\
& \lesssim AC^{-\frac 32}\lambda^{-\frac 12}\|a\|_{C^1}^2(1+\|\prtl_{rss}^3\phi\|_{L^\infty})\int_{|r-r'|\lesssim \frac{C}{4A}} |r-r'|^{-\frac 12} \,dr \\
 & \lesssim A^{\frac 32}C^{-1}\lambda^{-\frac 12}\|a\|_{C^1}^2,
\end{align*}
and since the same holds replacing $dr$ by $dr'$, \eqref{dualoiobound} will follow.

To see \eqref{kernelklbd}, first observe that it suffices to restrict attention to the case $\mu \geq 1$.  In this case it is a consequence of
\begin{multline*}
\left|\int e^{i\mu\varphi(r,r',s)} \tilde{a}_{k,l}(r,r',s)\,ds\right| \\
\lesssim \mu^{-1/2}\|\tilde{a}_{k,l}\|_{L^\infty} + \mu^{-1/2}|I_l|\left( \|\prtl_s\tilde{a}_{k,l}\|_{L^\infty} + \|\tilde{a}_{k,l}\|_{L^\infty} \|\prtl_{ss}^2\varphi\|_{L^\infty}\right)
\end{multline*}
which follows by a standard integration by parts when $|\prtl_s \varphi (r,r',s)|\geq 1$ (phase has no critical points) and a stationary phase estimate otherwise (see e.g. \cite[p.332-4]{SteinHarmonicAnalysis}).
\end{proof}

\section{The variation through geodesics}\label{S:variation}
We fix $(r_0,s_0) \in [0,1]^2$ and seek to compute $\prtl_{rs}^2\phi(r_0,s_0)$, $\prtl_{rss}^3\phi(r_0,s_0)$ and higher order derivatives of $\phi$ by using a variation through geodesics.  We thus let $\rho_0 = \phi(r_0,s_0) = d(\gamma(s_0),\eta(r_0))$ and define
\[
\Psi(r,s,t): [0,1]\times [0,1]\times [0,\rho_0] \to \wtM
\]
to be the time $t$ value of the geodesic starting at $\eta(r)$ and ending at $\gamma(s)$ parameterized with speed $d(\gamma(s),\eta(r))/\rho_0$.  Thus $t\mapsto \Psi(r_0,s_0,t)$ is parameterized by arc length, but this is not necessarily true for other $(r,s)$. Hence
\begin{equation}\label{lengthfunctional}
\phi(r,s) = d(\gamma(s),\eta(r)) =  \int_0^{\rho_0} |\prtl_t\Psi(r,s,t)|_g\,dt.
\end{equation}
We denote $\sigma(t):[0,\rho_0]\to M$ as this unit speed geodesic from $\eta(r_0)$ to $\gamma(s_0)$ so that $\sigma(t) = \Psi(r_0,s_0,t)$.  Also, $\dot{\sigma}_t$ denotes the velocity of $\sigma$ at time $t$.

In what follows, we use $\nabla$ to denote usual operations involving the Levi-Civita (Riemannian) connection and $D_r,D_s,D_t$ to denote covariant differentiation along curves with respect to $r,s,t$.  Observe that $\Psi(r,s,0)=\eta(r)$ and $\Psi(r,s,\rho_0)=\gamma(s)$. Hence $D_r \prtl_r \Psi|_{t=0}=0$, and $D_s \prtl_s \Psi|_{t=\rho_0}=0$.  Moreover,
\begin{equation}\label{vanishingendpointsprep}
\prtl_r\Psi(r,s,\rho_0)=0 \qquad \text{and} \qquad \prtl_s\Psi(r,s,0)=0.
\end{equation}
We will make frequent use, often without reference, of the so called ``symmetry lemma" (see e.g. \cite[Lemma 6.3]{LeeRiemannian} or \cite[p.68]{doCarmoRiemannian}), stating that
\[
D_r \prtl_s \Psi = D_s \prtl_r \Psi
\]
and the same holds for any choice of two variables from $(r,s,t)$.  Given these observations, we have the following properties which will be of use below:
\begin{equation}\label{vanishingendpoints}
D_s\prtl_r\Psi\big|_{t=0,\rho_0},D_r\prtl_r\Psi\big|_{t=0,\rho_0}, D_s\prtl_s\Psi\big|_{t=0,\rho_0}=0.
\end{equation}

By its very definition, $\Psi$ is a variation through geodesics in that for every fixed $(r,s)$, $t \mapsto \Psi(r,s,t)$ is a geodesic, hence $X=\prtl_s \Psi$ and $X=\prtl_r\Psi$ satisfy the Jacobi equation (see e.g. \cite[Ch. 10]{LeeRiemannian} or \cite[Ch. 5]{doCarmoRiemannian})
\begin{equation}\label{jacobiequation}
D_t^2 X + R(X,\prtl_t\Psi)\prtl_t\Psi=0
\end{equation}
where $R(X,Y)Z = \nabla_X \nabla_Y Z - \nabla_Y\nabla_X Z-\nabla_{[X,Y]}Z$ denotes the Riemann curvature endomorphism (this is consistent with the conventions in \cite{LeeRiemannian}, \cite{CheegerEbin} but not \cite{doCarmoRiemannian}).  We often use $Rm$ to denote the the curvature tensor this yields, $Rm(X,Y,Z,W):= \langle R(X,Y)Z, W\rangle$. We will be particularly interested in the values of $\prtl_s\Psi$, $\prtl_r\Psi$ when $(r,s)=(r_0,s_0)$, so we set
\[
V_t := \prtl_r\Psi(r_0,s_0,t), \qquad W_t := \prtl_s\Psi(r_0,s_0,t)
\]
Note that by construction
\begin{equation}\label{endpointinfo}
V_0 = \dot\eta(r_0),\; W_{\rho_0}= \dot\gamma(s_0)\qquad \text{ and } \qquad V_{\rho_0} = 0, \;W_0 =0
\end{equation}

Next we recall the distinguishing properties of perpendicular Jacobi fields (solutions to \eqref{jacobiequation}).  A Jacobi field $X_t$ is said to be \emph{normal} along $\sigma(t)$ if $X_t \perp \dot{\sigma}_t$ for every $t \in [0,\rho_0]$.  As in \cite[Lemma 10.6]{LeeRiemannian} (or similarly \cite[p.118-9]{doCarmoRiemannian}), $X_t$ is normal if and only if $X_a \perp \dot{\sigma}_a$ and $D_t X_t|_{t=a}\perp \dot{\sigma}_a$ for some $a \in [0,\rho_0]$ or if $X_t$ is perpendicular to $\dot{\sigma}_t$ at two points in $[0,\rho_0]$.  This can be verified by appealing to a property we use below, namely that the tangential component $\langle X_t, \dot{\sigma}_t\rangle$ of an arbitrary (not necessarily normal) Jacobi field $X_t$ is a linear function of $t$.  Indeed, this is a consequence of of the compatibility of the connection and the symmetries of $Rm$, which imply that $\langle D_tX_t, \dot{\sigma}_t\rangle$ is constant.  Hence an arbitrary Jacobi field $X_t$ decomposes into tangential and normal parts, both of which are solutions to the Jacobi equation
\[
X_t^\top:=\langle X_t,\dot{\sigma}_t \rangle \dot{\sigma}_t, \qquad X_t^\perp:=X_t-X_t^\top.
\]
In particular, this means that $D_t (X_t^\perp) = (D_tX_t)^\perp$, so that the meaning of $D_t X_t^\perp$ is unambiguous below. We remark that as a particular case of this, $\langle D_tV_t,\dot{\sigma}_t\rangle$, $\langle D_tW_t,\dot{\sigma}_t\rangle$ are constant and hence we have unambiguously
\begin{equation}\label{linearity}
\langle V_0,\dot{\sigma}_0\rangle = -\rho_0 \langle D_tV_t,\dot{\sigma}_t\rangle, \qquad \langle W_{\rho_0},\dot{\sigma}_{\rho_0}\rangle = \rho_0 \langle D_tW_t,\dot{\sigma}_t\rangle.
\end{equation}

A common method of analyzing normal Jacobi fields such as $V_t^\perp$, $W_t^\perp$ is to introduce parallel vector fields $P_t$, $Q_t$ along $\sigma(t)$ satisfying $D_tP_t =D_tQ_t = 0$ and $|P_t|_g = |Q_t|_g=1$, then writing $W_t^\perp = w(t)Q_t$ and similarly for $V_t^\perp$.  This is particularly illuminating when $\wtM$ has constant sectional curvature $\mathscr{K}\equiv -1$ or $\mathscr{K}\equiv 0$ as in this case inserting the expression $w(t)Q_t$ in \eqref{jacobiequation} yields the differential equation $w''(t) = w(t)$ and $w''(t) = 0$ respectively.  Thus if $Q_t$ is normalized to have unit length, then since $W_0=0$
\[
W_t^\perp =
\begin{cases}
\sinh t |D_t W_0^\perp|_gQ_t, & \mathscr{K} \equiv -1,\\
 t |D_t W_0^\perp|_gQ_t, & \mathscr{K} \equiv 0
\end{cases}.
\]
As a consequence of the Rauch comparison theorem (cf. \cite[Ch. 10]{doCarmoRiemannian}, \cite[Theorem 1.33]{CheegerEbin}), we have that in the hypotheses of Theorem \ref{T:maintheorem2D}, where sectional curvatures are pinched between $-1$ and $0$, $|W_t^\perp|_g$ is bounded by the lengths of these two expressions, that is,
\begin{equation}\label{jacobipinch}
t |D_t W_0^\perp|_g \leq |W_t^\perp|_g \leq \sinh t |D_t W_0^\perp|_g, \qquad t \in [0,\rho_0].
\end{equation}
Evaluating this at $t=\rho_0$ yields upper and lower bounds on $|D_t W^\perp_0|_g $
\begin{equation}\label{jacobipinchbounds}
\csch \rho_0 |\dot\gamma(s_0)^\perp|\leq |D_t W^\perp_0|_g \leq \frac{1}{\rho_0} |\dot\gamma(s_0)^\perp|.
\end{equation}
When $\mathscr{K} \equiv -1$, one can instead solve the two point boundary value problem with values given by \eqref{endpointinfo} to obtain
\begin{equation}\label{paralleljacobi}
V_t^\perp = \frac{\sinh(\rho_0-t)}{\sinh{\rho_0}}|\dot\eta(r_0)^\perp |_gP_t, \qquad W_t^\perp = \frac{\sinh t }{\sinh{\rho_0}}|\dot\gamma(s_0)^\perp|_gQ_t .
\end{equation}
We will make use of \eqref{jacobipinchbounds}, \eqref{paralleljacobi} below to prove Theorems \ref{T:maintheorem2D} and \ref{T:maintheorem3D}.

\subsection{The second and third variation formulas}
\begin{theorem}\label{T:variation}
Let $\Psi(r,s,t)$ be the variation through geodesics defined above.  Then $\phi$ satisfies the second variation formula at $(r_0,s_0)$
\begin{equation}\label{secondvariation}
\prtl_{rs}^2\phi(r_0,s_0) =-\langle V_0^\perp,D_tW_0^\perp\rangle,
\end{equation}
where $D_tW_0$ denotes $D_t W_t|_{t=0}$.  We also have the ``third variation" formula
\begin{multline}\label{thirdvariation}
\prtl_{rss}^3\phi(r_0,s_0)=
-\langle V_0^\perp,(D^2_s\prtl_t\Psi)^\perp|_{(r_0,s_0,0)}\rangle \\
+ 2\langle D_tW_0^\perp, V_0^\perp\rangle \langle D_t W_0, \dot{\sigma}_0\rangle+|D_tW_0^\perp|^2_g \langle V_0, \dot{\sigma}_0\rangle.
\end{multline}
\end{theorem}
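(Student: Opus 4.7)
My plan is to use the first variation formula to reduce every higher derivative to the covariant differentiation of a unit tangent vector in the base parameters. Since each $t$-curve of $\Psi$ is a geodesic, $|\prtl_t\Psi|_g$ is constant in $t$ (though it depends on $(r,s)$), so it pulls out of the integral in \eqref{lengthfunctional}; applying $D_t\prtl_t\Psi=0$ together with the boundary data \eqref{vanishingendpointsprep} gives the boundary-only formula
\begin{equation*}
\prtl_r\phi(r,s) = -\langle\dot\eta(r),\hat u_0(r,s)\rangle, \qquad \hat u_0(r,s) := \frac{\prtl_t\Psi(r,s,0)}{|\prtl_t\Psi|_g}.
\end{equation*}
All further $s$-derivatives of $\prtl_r\phi$ now come from differentiating this identity in $s$ at fixed $r$. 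Since the basepoint $\eta(r)$ does not depend on $s$, the covariant derivative $D_s$ acts in a fixed tangent space $T_{\eta(r)}\wtM$, so iterated $D_s$ is unambiguous and $\dot\eta(r)$ passes cleanly through each $D_s$ without curvature corrections.

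For \eqref{secondvariation} I would apply one $D_s$: $\prtl_{rs}^2\phi = -\langle \dot\eta(r), D_s\hat u_0\rangle$. Writing $\hat u_0 = A/|A|_g$ with $A := \prtl_t\Psi(\cdot,\cdot,0)$, the symmetry lemma identifies $D_sA|_{(r_0,s_0)} = D_tW_0$, and the scalar identity $\prtl_s|\prtl_t\Psi|_g|_{(r_0,s_0)} = \langle D_tW_0, \dot\sigma_0\rangle$ is obtained by differentiating $|\prtl_t\Psi|_g^2 = \langle A,A\rangle$ (and using that the left-hand side is $t$-independent, so the right-hand side can be evaluated at $t=0$). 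The quotient rule then gives $D_s\hat u_0|_{(r_0,s_0)} = D_tW_0 - \langle D_tW_0,\dot\sigma_0\rangle \dot\sigma_0 = D_tW_0^\perp$. Pairing against $V_0 = \dot\eta(r_0)$ and noting that the tangential part $\langle V_0,\dot\sigma_0\rangle\dot\sigma_0$ annihilates $D_tW_0^\perp$, one obtains $\prtl_{rs}^2\phi(r_0,s_0) = -\langle V_0^\perp, D_tW_0^\perp\rangle$.

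For \eqref{thirdvariation} I would differentiate once more to get $\prtl_{rss}^3\phi = -\langle\dot\eta(r), D_s^2\hat u_0\rangle$. Setting $f := |\prtl_t\Psi|_g$ and applying the quotient rule twice to $\hat u_0 = f^{-1}A$, the value at the basepoint is
\begin{equation*}
D_s^2\hat u_0\big|_{(r_0,s_0)} = D_s^2 A - 2(\prtl_s f)\,D_sA + \bigl(2(\prtl_s f)^2 - \prtl_s^2 f\bigr)\dot\sigma_0.
\end{equation*}
The key algebraic step, obtained by differentiating $f^2 = \langle A,A\rangle$ twice at the basepoint, is
\[
\prtl_s^2 f\big|_{(r_0,s_0)} = \langle D_s^2 A, \dot\sigma_0\rangle + |D_tW_0^\perp|_g^2.
\]
Substituting this into $-\langle V_0, D_s^2\hat u_0\rangle$ and expanding $V_0 = V_0^\perp + \langle V_0,\dot\sigma_0\rangle\dot\sigma_0$ and $D_tW_0 = D_tW_0^\perp + \langle D_tW_0,\dot\sigma_0\rangle\dot\sigma_0$, the scalar factor $\langle V_0,\dot\sigma_0\rangle$ absorbs the $|D_tW_0^\perp|_g^2$ contribution coming from $\prtl_s^2 f$; the cross term $-2(\prtl_s f)\langle V_0, D_sA\rangle$ contributes $2\langle D_tW_0,\dot\sigma_0\rangle\langle V_0^\perp, D_tW_0^\perp\rangle$ together with a $2\langle D_tW_0,\dot\sigma_0\rangle^2\langle V_0,\dot\sigma_0\rangle$ piece that cancels against the $2(\prtl_s f)^2$ contribution; the $\langle D_s^2A,\dot\sigma_0\rangle\langle V_0,\dot\sigma_0\rangle$ pieces cancel between the $D_s^2 A$ and $\prtl_s^2 f$ terms. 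What survives is precisely \eqref{thirdvariation}.

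The main obstacle I anticipate is the bookkeeping of normal and tangential components: each factor $\prtl_s f$, each occurrence of $D_tW_0$, and the scalar $\prtl_s^2 f$ produces contributions in both the $\dot\sigma_0$-direction and its orthogonal complement, and one must verify the cancellations above so that the final answer collapses to exactly three terms. Notably, the Jacobi equation is not invoked in the derivation — only the geodesic property of the $t$-curves of $\Psi$ (which makes $f$ $t$-independent), the symmetry lemma, and the boundary information \eqref{vanishingendpointsprep}--\eqref{vanishingendpoints} are used, consistent with the author's stated goal of a coordinate-free approach.
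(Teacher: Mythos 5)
Your derivation is correct, and it reaches the same intermediate identity the paper relies on, but by a genuinely shorter route. Your key observation is that for a variation through geodesics the \emph{first} variation is already a boundary term: since $t\mapsto\Psi(r,s,t)$ is a geodesic of constant speed, $D_t\prtl_t\Psi=0$ and $|\prtl_t\Psi|_g$ is $t$-independent, so the integrand $\langle D_t\prtl_r\Psi,\prtl_t\Psi\rangle/|\prtl_t\Psi|_g$ is the exact derivative of $\langle\prtl_r\Psi,\prtl_t\Psi\rangle/|\prtl_t\Psi|_g$, and with \eqref{vanishingendpointsprep} one gets $\prtl_r\phi(r,s)=-\langle\dot\eta(r),\hat u_0(r,s)\rangle$ immediately. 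The paper instead differentiates the first variation integral once more in $s$, then uses the commutator formula \eqref{covariantcomm} together with the Jacobi equation to integrate back by parts and recover the boundary-only form \eqref{simple2ndpartial}. Your route lands on the same formula (one line of quotient-rule algebra from $-\langle\dot\eta(r),D_s\hat u_0\rangle$ reproduces \eqref{simple2ndpartial} exactly) but avoids the curvature endomorphism and Jacobi equation entirely. From \eqref{simple2ndpartial} onward the two computations are the same: differentiate once more in $s$, use the symmetry lemma $D_s\prtl_t\Psi=D_t\prtl_s\Psi$, and organize the result by tangential/normal decomposition; your basepoint identity
\begin{equation*}
\prtl_s^2 f\big|_{(r_0,s_0)} = \langle D_s^2A,\dot\sigma_0\rangle + |D_tW_0^\perp|_g^2
\end{equation*}
encodes precisely the cancellation between the paper's $|D_tW_0|_g^2$ and $-3\langle D_tW_0,\dot\sigma_0\rangle^2$ terms. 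One point you implicitly use and should make explicit: because $\Psi(r,s,0)=\eta(r)$ is $s$-independent, $A(\cdot,\cdot)=\prtl_t\Psi(\cdot,\cdot,0)$ and $\hat u_0$ are $s$-families in the fixed tangent space $T_{\eta(r)}\wtM$, so the $s$-curves are constant curves and $D_s$ is literally the ordinary derivative in a fixed vector space; this is what makes iterated $D_s$ unambiguous and lets $\dot\eta(r)$ pass through. The one thing the paper's longer route buys is that it sets up the machinery (commutator and inhomogeneous Jacobi equations) that is needed anyway for Lemma \ref{T:iteratedcovariantlemma} and the higher-order bounds in Lemma \ref{T:expupperbd}; your streamlined derivation of the exact second and third variation formulas would still have to be supplemented by those tools elsewhere.
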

The formula \eqref{secondvariation} is in some sense classical, implicitly appearing in texts such as \cite[\S1.7]{CheegerEbin}, \cite[Proposition 10.14]{LeeRiemannian} but we give a complete proof here in the interest of setting the stage for \eqref{thirdvariation}.

\begin{proof}
Differentiating \eqref{lengthfunctional} in $r$ and applying the symmetry lemma yields
\[
\prtl_{r}\phi(r,s) = \int_0^{\rho_0} \frac{\langle D_t \prtl_r\Psi(r,s,t), \prtl_t\Psi(r,s,t)\rangle}{|\prtl_t\Psi(r,s,t)|_g}\,dt .
\]
In what follows, we suppress the arguments of $\Psi$. Using the symmetry lemma again, we have $\prtl_{rs}^2\phi(r,s)$ is
\begin{equation*}
\int_0^{\rho_0} \frac{\langle D_s D_t \prtl_r\Psi, \prtl_t\Psi\rangle+\langle D_t \prtl_r\Psi, D_t\prtl_s\Psi\rangle}{|\prtl_t\Psi|_g}-\frac{\langle D_t \prtl_r\Psi, \prtl_t\Psi\rangle\langle D_t\prtl_s \Psi,\prtl_t\Psi\rangle}{|\prtl_t\Psi|_g^3}\,dt.
\end{equation*}
We next recall the following commutator formula, valid for any vector field $X$ along the variation in that $X(r,s,t) \in T_{\Psi(r,s,t)} \wtM$ (cf. \cite[Lemma 10.1]{LeeRiemannian} or \cite[Lemma 4.1, p.98]{doCarmoRiemannian})
\begin{equation}\label{covariantcomm}
[D_s,D_t]X := D_s D_t X- D_t D_s X =R(\prtl_s\Psi,\prtl_t\Psi)X.
\end{equation}
Applying this with $X=\prtl_r\Psi$ along with the skew symmetry of $Rm$ in the last two entries (cf. \cite[Proposition 7.4]{LeeRiemannian} or \cite[p.91]{doCarmoRiemannian}), $\prtl_{rs}^2\phi(r,s)$ is
\begin{multline}\label{fullmixedpartial}
\int_0^{\rho_0} \frac{\langle D_t D_s \prtl_r\Psi, \prtl_t\Psi\rangle +
\langle R(\prtl_s\Psi,\prtl_t\Psi)\prtl_r\Psi, \prtl_t\Psi\rangle + \langle D_t \prtl_r\Psi, D_t\prtl_s\Psi\rangle}{|\prtl_t\Psi|_g}\,dt\\
-\int_0^{\rho_0}\frac{\langle D_t \prtl_r\Psi, \prtl_t\Psi\rangle\langle D_t\prtl_s \Psi,\prtl_t\Psi\rangle}{|\prtl_t\Psi|_g^3}\,dt = \\
\left[\frac{\langle D_s \prtl_r\Psi, \prtl_t\Psi\rangle}{|\prtl_t\Psi|_g}\right]_{t=0}^{t=\rho_0} + \int_0^{\rho_0} \frac{- \langle R(\prtl_s\Psi,\prtl_t\Psi)\prtl_t\Psi, \prtl_r\Psi\rangle+
\langle D_t \prtl_r\Psi, D_t\prtl_s\Psi\rangle}{|\prtl_t\Psi|_g}\,dt\\
-\int_0^{\rho_0}\frac{\langle D_t \prtl_r\Psi, \prtl_t\Psi\rangle\langle D_t\prtl_s \Psi,\prtl_t\Psi\rangle}{|\prtl_t\Psi|_g^3}\,dt
\end{multline}
where used that for fixed $r,s$, the curve $t\mapsto \Psi(r,s,t)$ is a geodesic to obtain
\[
\frac{\prtl}{\prtl t}\left(\frac{\langle D_s \prtl_r\Psi, \prtl_t\Psi \rangle}{|\prtl_t \Psi|_g}\right) = \frac{\langle D_t D_s \prtl_r\Psi, \prtl_t\Psi \rangle}{|\prtl_t \Psi|_g}.
\]
But given \eqref{vanishingendpoints}, the boundary term in \eqref{fullmixedpartial} vanishes for all $(r,s)$.

At this stage, it is common to simplify \eqref{fullmixedpartial} to obtain
\begin{equation*}
\prtl_{rs}^2\phi(r_0,s_0) = \int_0^{\rho_0} \langle D_t V_t^\perp, D_t W_t^\perp\rangle- Rm(W_t^\perp,\dot{\sigma}_t,\dot{\sigma}_t, V_t^\perp)\,dt.
\end{equation*}
Instead, we use the Jacobi equation $D_t^2 \prtl_s \Psi = -R(\prtl_s\Psi,\prtl_t\Psi)\prtl_t\Psi$ to get that
\begin{equation*}
\prtl_{rs}^2\phi(r,s) = \int_0^{\rho_0} \frac{\prtl}{\prtl t} \left(\frac{\langle \prtl_r\Psi, D_t\prtl_s\Psi\rangle}{|\prtl_t \Psi|_g}\right) -\frac{\langle D_t \prtl_r\Psi, \prtl_t\Psi\rangle\langle D_t\prtl_s \Psi,\prtl_t\Psi\rangle}{|\prtl_t\Psi|_g^3}\,dt
\end{equation*}
Since $\prtl_r\Psi(r,s,\rho_0)\equiv 0$, we are only concerned with the contribution at $t=0$ after integrating the first term in the integrand.  Moreover, $\langle D_t \prtl_r\Psi, \prtl_t\Psi\rangle$ is independent of $t$, and the same holds when $s,\prtl_s\Psi$ replace $r,\prtl_r\Psi$.  Hence the second term here is simply $\rho_0$ times the integrand, and the factors can be evaluated at any point.  Thus since $\langle \prtl_r\Psi, \prtl_t\Psi\rangle|_{t=0}=-\rho_0\langle D_t \prtl_r\Psi, \prtl_t\Psi\rangle|_{t=\rho_0}$ by the same linearity principle as in \eqref{linearity}, we conclude that
\begin{equation}\label{simple2ndpartial}
\prtl_{rs}^2\phi(r,s) = -\frac{\langle \prtl_r\Psi, D_t\prtl_s\Psi\rangle}{|\prtl_t \Psi|_g}\bigg|_{t=0}  +
\frac{\langle \prtl_r\Psi, \prtl_t\Psi\rangle\langle D_t\prtl_s\Psi,\prtl_t\Psi\rangle}{|\prtl_t\Psi|_g^3}\bigg|_{t=0}.
\end{equation}
At $(r_0,s_0)$, this is $-\langle V_0,D_tW_0\rangle + \langle V_0,\dot\sigma_0\rangle \langle D_tW_0,\dot\sigma_0\rangle$ in the notation above since $|\prtl_t\Psi|_{(r_0,s_0)}|_g\equiv 1$.  But by orthogonal decompositions, this is \eqref{secondvariation}.

We now consider $\prtl_{rss}^3\phi(r,s)$.  Differentiating \eqref{simple2ndpartial} in $s$ yields
\begin{multline*}
\prtl_{rss}^3\phi(r,s)=-\frac{\langle \prtl_r\Psi, D_s^2\prtl_t\Psi\rangle}{|\prtl_t \Psi|_g}\bigg|_{t=0} +
\frac{\langle \prtl_r\Psi, D_t\prtl_s\Psi\rangle\langle D_t\prtl_s\Psi,\prtl_t\Psi\rangle}{|\prtl_t \Psi|_g^3}\bigg|_{t=0}\\
+\frac{\langle \prtl_r\Psi, D_t\prtl_s\Psi\rangle\langle D_t\prtl_s\Psi,\prtl_t\Psi\rangle}{|\prtl_t\Psi|_g^3}\bigg|_{t=0}
+\frac{\langle \prtl_r\Psi, \prtl_t\Psi\rangle
|D_t\prtl_s\Psi|_g^2}{|\prtl_t\Psi|_g^3}\bigg|_{t=0}
\\
+\frac{\langle \prtl_r\Psi, \prtl_t\Psi\rangle\langle D_s^2\prtl_t\Psi,\prtl_t\Psi\rangle}{|\prtl_t\Psi|_g^3}\bigg|_{t=0}
-\frac{3\langle \prtl_r\Psi, \prtl_t\Psi\rangle\langle D_t\prtl_s\Psi,\prtl_t\Psi\rangle^2}{|\prtl_t\Psi|_g^5}\bigg|_{t=0}
\end{multline*}
where we have made use of \eqref{vanishingendpoints} and repeated use of the symmetry lemma $D_t\prtl_s\Psi=D_s\prtl_t\Psi$.  The presentation here is such that the first line is the result of differentiating the first term in \eqref{simple2ndpartial}, but the second and third terms are indeed identical.  At $(r_0,s_0)$, this is
\begin{multline*}
-\langle V_0,D_s^2\prtl_t \Psi|_{(r_0,s_0,0)} \rangle
+2\langle V_0,D_tW_0\rangle \langle D_tW_0,\dot\sigma_0\rangle
+\langle V_0,\dot\sigma_0\rangle|D_tW_0|_g^2\\
+\langle V_0, \dot\sigma_0 \rangle \langle D_s^2\prtl_t\Psi|_{(r_0,s_0,0)},\dot\sigma_0\rangle
-3\langle V_0,\dot\sigma_0\rangle \langle D_tW_0,\dot\sigma_0\rangle^2
\end{multline*}
The identity \eqref{thirdvariation} follows by an orthogonal decomposition of the vectors in the top row and observing the cancellation with the bottom row.
\end{proof}

\begin{remark}\label{R:mixedpartialexpressions}
We stress that the above proof can be modified to show that
\begin{align*}
\prtl_{ss}^2\phi(r,s) &= \phantom{-}\frac{\langle D_t\prtl_s \Psi, \prtl_s \Psi\rangle}{|\prtl_t\Psi|_g}\bigg|_{t=\rho_0}+
\rho_0\frac{\langle D_t\prtl_s\Psi, \prtl_t\Psi\rangle^2 }{|\prtl_t\Psi|_g^3} \bigg|_{t=\rho_0},\\
\prtl_{rr}^2\phi(r,s) &= -\frac{\langle D_t\prtl_r \Psi, \prtl_r \Psi\rangle}{|\prtl_t\Psi|_g}\bigg|_{t=0}+
\rho_0\frac{\langle D_t\prtl_r\Psi, \prtl_t\Psi\rangle^2 }{|\prtl_t\Psi|_g^3} \bigg|_{t=0}.
\end{align*}
In the notation above, at $(r_0,s_0)$ the latter expression simplifies to
\begin{equation}\label{2ndpartialr0}
\prtl_{rr}^2\phi(r_0,s_0) = -\langle D_tV_0^\perp, V_0^\perp \rangle.
\end{equation}
These expressions will be used in the proofs of Lemmas \ref{T:expupperbd} and \ref{T:isolatedlemma} below.
\end{remark}
\subsection{Proof of Lemma \ref{T:expupperbd}}
To prove this lemma we use the identities in Remark \ref{R:mixedpartialexpressions} and \eqref{simple2ndpartial}.  Differentiating them in $r,s$ and using standard properties of the connection, the bounds in Theorem \ref{T:expupperbd} reduce to the following lemma, which also applies with $\prtl_r \Psi$ replacing $\prtl_s \Psi$.

\begin{lemma}\label{T:iteratedcovariantlemma}
Suppose $X$ is a smooth vector field along the variation $\Psi$ defined above.  Let $\mathbf m = (m_1,m_2,\dots, m_j)$ be a composition of the integer $k$ (ordered partition of the integer $k$) and let $D^\mbm X$ denote an iterated covariant differentiation of $X$ of the form
\[
\begin{cases}
D^\mbm X = D^{m_1}_r D^{m_2}_s \cdots D^{m_{j-1}}_r D^{m_{j}}_s X, & $j$ \text{ even}\\
D^\mbm X = D^{m_1}_r D^{m_2}_s \cdots D^{m_{j-1}}_s D^{m_{j}}_r X, & $j$ \text{ odd}
\end{cases}
\]
or as a similar expression with the roles of $r,s$ reversed.  Then there exists a uniform constant $C_k >0$ such that for $t \in [0,\rho_0]$
\begin{equation}\label{iteratedexpbds}
|D^\mbm \prtl_s \Psi(r_0,s_0,t)|_g + |D_t D^\mbm \prtl_s \Psi(r_0,s_0,t)|_g \lesssim e^{C_k \rho_0}.
\end{equation}
\end{lemma}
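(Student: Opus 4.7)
The plan is to proceed by induction on $k = |\mbm|$, treating the bounds on $|D^{\mbm} \prtl_s \Psi|_g$ and $|D_t D^{\mbm} \prtl_s \Psi|_g$ simultaneously. The base case $k=0$ is immediate from the Rauch comparison estimates \eqref{jacobipinch}--\eqref{jacobipinchbounds} and the linearity \eqref{linearity} of the tangential component of a Jacobi field: since $W_t = \prtl_s \Psi|_{(r_0,s_0,t)}$ is a Jacobi field along $\sigma$ with $W_0 = 0$ and $W_{\rho_0} = \dot\gamma(s_0)$, one has $|W_t|_g \lesssim \sinh t \lesssim e^{\rho_0}$; the Jacobi equation \eqref{jacobiequation} then bounds $|D_t^2 W_t|_g$, and $|D_t W_t|_g$ follows by integration.

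For the inductive step, fix a composition $\mbm$ of size $k \geq 1$ and set $Y := D^{\mbm} \prtl_s \Psi$. The central step is to derive an inhomogeneous Jacobi equation for $Y$ along $\sigma$,
\[
D_t^2 Y + R(Y,\dot\sigma)\dot\sigma = F, \qquad |F(t)|_g \lesssim e^{C'_k \rho_0}.
\]
To obtain this I would commute $D_t^2$ past the iterated $D_r, D_s$ in $D^{\mbm}$ using the commutator identity \eqref{covariantcomm}, substitute the Jacobi equation $D_t^2 \prtl_s \Psi = -R(\prtl_s \Psi, \prtl_t \Psi)\prtl_t \Psi$, and expand via the Leibniz rule for covariant differentiation. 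Every resulting term is either the principal term $R(Y,\dot\sigma)\dot\sigma$ (absorbed into the left-hand side), or else involves a covariant derivative of the curvature tensor (uniformly bounded because $M$ is compact) multiplied by iterated covariant derivatives of $\prtl_s \Psi, \prtl_r \Psi$, or $\prtl_t \Psi$ of total order strictly less than $k$. The induction hypothesis controls the $\prtl_s \Psi$ derivatives and, by a symmetric argument, the $\prtl_r \Psi$ derivatives; derivatives of $\prtl_t \Psi$ are converted via the symmetry lemma $D_\mu \prtl_t \Psi = D_t \prtl_\mu \Psi$ into $D_t$-derivatives of quantities already bounded by the induction.

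The boundary conditions $Y|_{t=0} = Y|_{t=\rho_0} = 0$ follow from \eqref{vanishingendpointsprep}: at $t=0$, $\prtl_s \Psi$ vanishes identically as a function of $(r,s)$, so every iterated $D_r, D_s$ of it vanishes there; at $t=\rho_0$, $\prtl_s \Psi = \dot\gamma(s)$ is $r$-independent and parallel along the geodesic $\gamma$, so every iterated composition of $D_r, D_s$ applied to it again produces zero. With these boundary conditions in hand, the two-point problem for the Jacobi operator is solved by its Dirichlet Green's function $G(t,t')$; decomposing $Y$ into tangential and normal components and expanding the normal part in a parallel orthonormal frame along $\sigma$ reduces matters to a Sturm--Liouville system whose potential matrix is pinched by the sectional curvature bound $-1 \leq K \leq 0$, and the classical comparison estimates then yield $|G(t,t')|, |\prtl_t G(t,t')| \lesssim \sinh \rho_0 \lesssim e^{\rho_0}$. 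Integrating against the source gives
\[
|Y(t)|_g + |D_t Y(t)|_g \lesssim \int_0^{\rho_0} \bigl(|G| + |\prtl_t G|\bigr)\, |F|_g\, dt' \lesssim e^{(C'_k+1)\rho_0},
\]
which closes the induction on choosing $C_k$ slightly larger than $C'_k+1$.

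The main obstacle is the combinatorial bookkeeping in the expansion of $D_t^2 D^{\mbm}$: one must verify that every term produced by the iterated commutation and Leibniz expansion either yields the principal Jacobi operator applied to $Y$, or places strictly fewer than $k$ derivatives on each of the $\prtl_\mu \Psi$ factors so that the inductive hypothesis applies. The Green's function estimates, by contrast, are standard once the pinched Jacobi field bounds of \eqref{jacobipinch}--\eqref{jacobipinchbounds} are recorded.
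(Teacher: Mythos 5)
Your overall strategy matches the paper's: derive an inhomogeneous Jacobi equation for $Y = D^{\mbm}\partial_s\Psi$ with source controlled inductively, observe the Dirichlet boundary conditions $Y(0)=Y(\rho_0)=0$, and solve the two-point boundary value problem. Where you diverge is in the final solve. The paper avoids Green's functions entirely: it first runs Gronwall on $\tfrac12\partial_t(|Y|^2 + |D_tY|^2)$ for the base case $k=0$ (instead of your Rauch-plus-integration argument, though both work), and in the inductive step decomposes the BVP solution as $Y_t + Z_t$, where $Y_t$ solves the inhomogeneous equation with \emph{vanishing initial data} at $t=0$ (bounded by a second Gronwall, since the source is exponentially bounded by the inductive hypothesis), and $Z_t$ is a homogeneous Jacobi field correcting the endpoint at $t=\rho_0$, with $|D_tZ_0|$ controlled via the same two-point argument as in \eqref{jacobipinchbounds} and then Gronwall again. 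You instead propose estimating the Dirichlet Green's function $G(t,t')$ directly; this is plausible and would also give exponential bounds, but it is less elementary than the paper's route: for a matrix Jacobi system the eigendirections of the curvature operator $R(\cdot,\dot\sigma)\dot\sigma$ may rotate in $t$, so the ``reduce to scalar Sturm--Liouville by pinching'' step needs to be phrased as a genuine system comparison (Rauch for systems or an index-form argument), and one must separately verify the $|\partial_t G|$ bound. Both routes rely on $\wtM$ having no conjugate points to ensure solvability of the BVP, so neither gains on hypotheses. The paper's decomposition-plus-Gronwall approach simply sidesteps constructing $G$ at all, which is why it is the cleaner choice given the exponential tolerance. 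Your boundary-condition argument (that every $D^{\mbm}\partial_s\Psi$ vanishes at $t=0,\rho_0$) is correct and is the same observation the paper records via \eqref{vanishingendpoints}.
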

\begin{proof}
The first observation is that $D^\mbm \prtl_s \Psi$ satisfies an nonhomogeneous Jacobi equation of the form
\begin{equation}\label{nonhomogjacobi}
D_t^2 D^\mbm \prtl_s \Psi + R(D^\mbm \prtl_s \Psi, \prtl_t \Psi)\prtl_t\Psi + S_\mbm =0
\end{equation}
where $S_\mbm$ is a vector field along the variation which is induced\footnote{The relation between tensors and vector fields can be reviewed in \cite[Lemma 2.4]{LeeRiemannian}.} by the pullback of a sum of tensors on $M$, evaluated along a subcollection of the vector fields $D^{\mathbf{\tilde m}} \prtl_s \Psi$, $D_tD^{\mathbf{\tilde m}} \prtl_s \Psi$, $\prtl_t \Psi$ for which $\mathbf{\tilde m}$ is a composition of an integer strictly less than $k$.  This can be verified by induction on $k$, where the $k=0$ case is the homogeneous Jacobi equation \eqref{jacobiequation}.  We thus assume \eqref{nonhomogjacobi} holds and take the derivative $D_r$ of both sides (or similarly $D_s$)
\begin{multline}\label{jacobicommute}
0=D_t^2 D_r D^\mbm \prtl_s \Psi + R(D_r D^\mbm \prtl_s \Psi, \prtl_t \Psi)\prtl_t\Psi + [D_r,D_t^2] D^\mbm \prtl_s \Psi \\
+\big( D_r(R(D^\mbm \prtl_s \Psi, \prtl_t \Psi)\prtl_t\Psi )- R(D_r D^\mbm \prtl_s \Psi, \prtl_t \Psi)\prtl_t\Psi\big) + D_rS_\mbm
\end{multline}
The first two terms here are the principal terms in \eqref{nonhomogjacobi}.  Recalling \eqref{covariantcomm} and properties of covariant differentiation of tensors (cf. \cite[Lemma 4.6]{LeeRiemannian} or \cite[p.102]{doCarmoRiemannian}), we have the following computation in $X=D^\mbm \prtl_s \Psi$
\begin{align*}
[D_r,D_t^2] X&= R(\prtl_r \Psi, \prtl_t \Psi)D_t X+ D_t\big(R(\prtl_r \Psi, \prtl_t \Psi)X\big)\\
&=2R(\prtl_r \Psi, \prtl_t \Psi)D_t X+R(D_t \prtl_r \Psi, \prtl_t \Psi)X+ (D_tR)(\prtl_r \Psi, \prtl_t \Psi)X
\end{align*}
where the last term on the right is a vector field which can be realized as the result of raising the 1-form which acts on tangent vectors $Y$ by
\[
Y \mapsto \nabla Rm (\prtl_s \Psi, \prtl_t \Psi,X,Y,\prtl_t\Psi),
\]
where $\nabla Rm$ is the covariant derivative of the tensor.  Since the projection map $\pi : \wtM \to M$ is a local isometry, this can be realized as the pullback of a tensor on $M$.  Hence its action on vectors in the unit ball bundle of tangent vectors is uniformly bounded.  Next we turn to the term in parentheses in \eqref{jacobicommute}, which can be written similarly as
\begin{equation*}
R(D^\mbm \prtl_s \Psi, D_t \prtl_r \Psi)\prtl_t\Psi + R(D^\mbm \prtl_s \Psi, \prtl_t \Psi)D_t \prtl_r\Psi
 + (D_rR)(D^\mbm \prtl_s \Psi, \prtl_t \Psi)\prtl_t\Psi.
\end{equation*}
The last term here can similarly be expressed in terms of $\nabla Rm$.  A similar argument which takes the covariant derivative of the tensor inducing $S_\mbm$ allows us to compute the last term in \eqref{jacobicommute}.

We now use \eqref{nonhomogjacobi} at the point $(r_0,s_0)$ and induct on $k$ to establish \eqref{iteratedexpbds}. Using the Jacobi equation \eqref{jacobiequation} for the base step $k=0$, we have
\[
\frac 12 \prtl_t \left(|\prtl_s \Psi|_g^2 +  |D_t\prtl_s \Psi|_g^2  \right) = \langle \prtl_s \Psi, D_t\prtl_s \Psi \rangle - Rm(\prtl_s \Psi, \dot\sigma_t, \dot\sigma_t, D_t\prtl_s \Psi).
\]
But $Rm$ can be realized as the pullback of a tensor on $M$ and since $|\dot\sigma_t|_g=1$, there exists a uniform constant $C_0>0$ such that
\[
\frac 12 \prtl_t \left(|\prtl_s \Psi|_g^2 +  |D_t\prtl_s \Psi|_g^2  \right) \leq
C_0\left( |\prtl_s \Psi|_g^2 +  |D_t\prtl_s \Psi|_g^2  \right).
\]
Denoting $\prtl_s\Psi(r_0,s_0,t)=W_t$, by Gronwall's inequality, \eqref{endpointinfo}, and \eqref{jacobipinchbounds},
\begin{equation*}
|W_t|_g +  |D_tW_t|_g \lesssim   \left(|W_0|_g +  |D_tW_0|_g\right) e^{C_0t} \lesssim e^{C_0t}.
\end{equation*}

We now assume that we have a bound of the form \eqref{iteratedexpbds} for every integer strictly less than $k$.  Let $\mbm$ be a composition of $k$ and set $X=D^\mbm \prtl_s \Psi|_{(r_0,s_0)}$ so that $X_t=X(r_0,s_0,t)$ solves the two point boundary value problem
\begin{equation}\label{nonhomogjacobibvp}
\begin{cases}
D_t^2 X_t + R(X_t, \dot\sigma_t)\dot\sigma_t + S_\mbm =0\\
X(r_0,s_0,0)=0=X(r_0,s_0,\rho_0)
\end{cases}.
\end{equation}
Indeed, given \eqref{vanishingendpoints}, we have $D^\mbm \prtl_s \Psi|_{t=0,\rho_0} \equiv 0$.  Treating $X_t$ as a solution to a two point boundary value problem is preferable to treating it as a solution to an initial value problem as before since we do not have uniform estimates on $D_t X_t$ a priori.  We now write $X_t=Y_t+Z_t$ where $Z_t$ will be a homogeneous solution to \eqref{jacobiequation} and $Y_t$ solves the nonhomogeneous equation in \eqref{nonhomogjacobibvp} but with vanishing initial data $Y_0,D_tY_0=0$.  We now observe
\begin{align*}
\prtl_t|Y_t|_g &= |Y_t|_g^{-1} \langle D_t Y_t,Y_t \rangle \leq |D_t Y_t|_g\\
\prtl_t|D_tY_t|_g &= -|D_tY_t|_g^{-1} \left(Rm(Y_t,\prtl_t \Psi, \prtl_t \Psi, D_tY_t)+\langle S_\mbm,D_tY_t\rangle\right)\\
 & \lesssim C_0|Y_t|_g + e^{C_{k-1}\rho_0}
\end{align*}
so the integral version of Gronwall inequality applied to their sum yields
\[
|Y_t(r_0,s_0,t)|_g + |D_tY_t(r_0,s_0,t)|_g \lesssim e^{C_k\rho_0 }
\]
for some constant $C_k$.  We now take $Z_t$ to solve the homogeneous Jacobi equations with $Z_0=0$, $Z_{\rho_0}=-Y_{\rho_0}$ (which exists since $\wtM$ has no conjugate points, cf. \cite[Exercise 10.2]{LeeRiemannian}, \cite[Proposition 3.9]{doCarmoRiemannian}).  Arguing as in \eqref{jacobipinchbounds}, we have that
$
|D_tZ_0|_g \leq |Y_{\rho_0}| \lesssim e^{C_k\rho_0}
$
so by the Gronwall argument for the homogeneous equation above, we have  $|X_t|_g \lesssim e^{C_k\rho_0}$.
\end{proof}

\subsection{Proof of Lemma \ref{T:mixedpartial}}\label{SS:proofofpartiallowers}
The first ingredient we need are lower and upper bounds on $|D_tW_0^\perp|_g$.  To accomplish this we use the consequence of the Rauch comparision theorem \eqref{jacobipinchbounds}. Since $\alpha \notin \Psi_{{\mathcal T}_R(\tilde \gamma)}$, $e^{-T}\leq \angle(\dot\gamma(s_0),\dot\sigma_{\rho_0}) \leq \pi - e^{-T}$ and hence for $T$ sufficiently large
\[
|\dot\gamma(s_0)^\perp|_g = \sin\angle(\dot\gamma(s_0),\dot\sigma_{\rho_0})  \geq \frac{e^{-T}}2.
\]
We thus have that by \eqref{jacobipinchbounds},
\begin{equation}\label{deeteedublower}
e^{-2T}\leq |D_t W^\perp_0|_g \leq 1.
\end{equation}

Now suppose that \eqref{anglehyp} is satisfied, that is, $\angle(\dot\eta(r_0),\pm\dot\sigma(0)) < e^{-C_2T}$ for some choice of $\pm$, then
\[
| V_0^\perp|_g = | \dot\eta(r_0)^\perp|_g = \sin\angle(\dot\eta(r_0),\pm\dot\sigma(0)) < e^{-C_2T}.
\]
At the same time, by \eqref{covariantcomm},
\[
D_sD_t\prtl_s\Psi(r_0,s_0,0) = D_tD_s\prtl_s\Psi(r_0,s_0,0) + R(\prtl_s\Psi,\prtl_t\Psi)\prtl_s\Psi|_{(r_0,s_0,0)}
\]
and the second term here vanishes since $\prtl_s\Psi(r_0,s_0,0)=0$.  Thus by Lemma \ref{T:iteratedcovariantlemma} and the symmetry lemma, $|D_s^2\prtl_t\Psi(r_0,s_0,0)| \leq Ce^{CT}$ for some uniform constant $C$, implying that
\begin{equation}\label{accelerationbd}
|\langle V_0^\perp,(D^2_s\prtl_t\Psi)^\perp|_{(r_0,s_0,0)}\rangle| \leq Ce^{(C-C_2)T}.
\end{equation}
Moreover, by \eqref{linearity}, $|\langle D_tW_0,\dot\sigma_0\rangle| = |\langle W_{\rho_0},\dot\sigma_{\rho_0}\rangle|/\rho_0\leq 1$ and for $C_2$ and $T$ sufficiently large, we have that $|\langle V_0, \dot{\sigma}_0\rangle|\geq \frac 12$. Thus in view of \eqref{thirdvariation}, we have that for some larger value of $C$
\begin{equation*}
|\prtl_{rss}^3\phi(r_0,s_0)| \geq \frac 12|D_tW_0^\perp|^2_g -Ce^{(C-C_2)T} \geq \frac 12 e^{-4T}-Ce^{(C-C_2)T}
\end{equation*}
Thus if we take $C_2$ sufficiently large, we obtain a bound of the form \eqref{thirdpartiallower}.

Otherwise, if \eqref{anglehyp} is not satisfied, then since the tangent space at any point is 2 dimensional, whenever $V_0^\perp = \eta(r_0)^\perp$ is nonzero, it is parallel to $D_tW_0^\perp$ and hence by \eqref{deeteedublower}
\begin{equation}\label{closeanglebd}
\sin\angle(\dot\eta(r_0),\pm\dot\sigma(0)) = | V_0^\perp|_g =  \frac{|\langle V_0^\perp,D_tW_0^\perp\rangle|}{|D_tW_0^\perp|_g}
\leq e^{2T}|\prtl_{rs}^2\phi(r_0,s_0)|,
\end{equation}
showing that one can obtain a lower bound of the form \eqref{secondpartiallower}.

\subsection{Comparison with results in normal coordinates}
In \cite{ChenSogge}, the authors computed $\prtl_{rs}^2\phi(r_0,s_0)$ and $\prtl_{rss}^3\phi(r_0,s_0)$ by taking geodesic normal coordinates about $\eta(r_0)$.  In such coordinates, geodesics through the origin are straight lines parameterized with constant Euclidean velocity.  Thus if a point $q\in \wtM$ is denoted as $x$ in coordinates, $\nabla_1 d(\eta(r),q)$ is simply $-x/|x|$.  Now let $x(s)$ be the coordinate curve determined by $\gamma(s)$, and $y$ denote the tangent vector $\dot\eta(r_0)$, so that
\[
\prtl_{r}\phi(r_0,s) = \langle \nabla_1 d(\eta(r_0),\gamma(s)), \dot\eta(r_0)\rangle = -\frac{x(s)}{|x(s)|}\cdot y
\]
Here $\cdot$, $|\cdot|$ denote the Euclidean dot product and length respectively.  Indeed, the metric tensor at the origin is simply $g_{ij} = \delta_{ij}$ and the Christoffel symbols with respect to the coordinate frame vanish at the origin as well.  Differentiating $-\frac{x(s)}{|x(s)|}\cdot y$ in $s$, it can be seen that
\begin{equation}\label{secondpartialcoord}
\prtl_{rs}^2\phi(r_0,s_0) = -\frac{1}{|x(s_0)|}\left(\dot x(s_0)\cdot y- \left(\dot x(s_0)\cdot\frac{ x(s_0)}{|x(s_0)|}\right)\left(y\cdot\frac{x(s_0)}{|x(s_0)|}\right)\right)
\end{equation}
\begin{multline}\label{thirdpartialcoord}
\prtl_{rss}^3\phi(r_0,s_0)=-\frac{1}{| x(s_0)|}\left(\ddot{ x}(s_0)\cdot y- \left(\ddot{ x}(s_0)\cdot\frac{ x(s_0)}{| x(s_0)|}\right)
\left( y\cdot\frac{ x(s_0)}{| x(s_0)|}\right)\right)\\
+\frac{2 \dot x(s_0)\cdot\frac{ x(s_0)}{| x(s_0)|}}{| x(s_0)|^2}
\left(\dot x(s_0)\cdot y-\left(y\cdot \frac{ x(s_0)}{| x(s_0)|}\right) \left(\dot x(s_0)\cdot \frac{ x(s_0)}{| x(s_0)|}\right)\right)\\
+\frac{y\cdot\frac{ x(s_0)}{| x(s_0)|}}{| x(s_0)|^2}\left(|\dot x (s_0)|^2-\left(\dot x(s_0)\cdot \frac{ x(s_0)}{| x(s_0)|}\right)^2\right).
\end{multline}

It can be seen that \eqref{secondvariation} does indeed agree with \eqref{secondpartialcoord}.  To see this recall that $V_0 = \dot{\eta}(r_0)$, which we set as $y$ in coordinates.  Moreover, $\dot\sigma_0$ is $x(s_0)/|x(s_0)|$ and $\rho_0 = |x(s_0)|$ by the observations above.  Thus if we can check that $D_tW_0=\dot x(s_0)/|x(s_0)|$ in coordinates, we will have that both expressions are indeed the same.  But this follows from \cite[Lemma 10.7]{LeeRiemannian} or \cite[p.113]{doCarmoRiemannian}, which implies that in the coordinate system, the Jacobi field $W_t$ satisfying $W_0=0$ and $D_t W_0=z$ takes the form $tz$.  Thus we must have $|x(s_0)|z = \rho_0 z=\dot x(s_0)$ as these give $W_{\rho_0} = \dot\gamma(s_0)$ in coordinates.

One can similarly verify that \eqref{thirdpartialcoord} agrees with \eqref{thirdvariation}.  Here the only additional observation is that $\prtl_t \Psi(r_0,s,0)$ is $x(s)/\rho_0$ in coordinates, which implies that $D_s^2\prtl_t\Psi(r_0,s_0,0)$ is given by $ \ddot x(s_0)/|x(s_0)|$.  The first, second, and third lines in \eqref{thirdpartialcoord} correspond to the respective terms in \eqref{thirdvariation}.

It is therefore possible to obtain an analogous proof of Lemma \ref{T:mixedpartial} by using the expressions \eqref{secondpartialcoord}, \eqref{thirdpartialcoord}, reasoning similarly to \cite{ChenSogge} and \S\ref{SS:proofofpartiallowers}.  However, the coordinate free approach here has advantages in that the variation through geodesics yields the bounds in Lemma \ref{T:expupperbd} for derivatives such as $\prtl^2_{rrss}\phi$, which are needed for Lemma \ref{T:oiolemma} to be effective.

\section{Endpoint bounds in 3 dimensions}\label{S:3D}
In this section we conclude the proof of Theorem \ref{T:maintheorem3D}.   We claim that when $\alpha \notin \Gamma_{{\mathcal T}_R(\tilde \gamma)}$, there exists $C$ sufficiently large such that
\begin{equation}\label{kalphabd3D}
\left\|\int_0^{\frac 14}K_\alpha(r,s)f(s)\,ds\right\|_{L^2_r([0,\frac 14])}\lesssim \left(\frac{\lambda}{(\log \lambda)^{3}}+ \lambda^{\frac 34}e^{CT}\right)\|f\|_{L^2_s([0,\frac 14])}.
\end{equation}
The restriction to $(r,s) \in [0,1/4]^2$ here is harmless as one can write the square $[0,1]^2$ as the almost disjoint union of 16 cubes of sidelength $1/4$ and apply the argument below to each one to yield \eqref{kalphabd3D} with $[0,1]$ on both sides. Once this is established we then recall that there are $O(e^{2T})=O((\log \lambda)^2)$ nonzero kernels $K_\alpha$, and hence the proof of Theorem \ref{T:maintheorem3D} is completed as the contribution of $\alpha \in \Gamma_{{\mathcal T}_R(\tilde \gamma)}$ has already been treated.

We first treat the case where either $|\prtl_{r}\phi(r,s)| \geq e^{-4T}$ is satisfied for all $(r,s) \in [0,1/4]^2$ or $|\prtl_{s}\phi(r,s)| \geq e^{-4T}$ for all $(r,s) \in [0,1/4]^2$, and here the first term in parentheses in \eqref{kalphabd3D} is not needed.  We may assume the former, as the latter case can be treated by taking adjoints of the oscillatory integral operators.  We first extend Lemma \ref{T:mixedpartial}, at which point \eqref{kalphabd3D} follows by applying Lemmas \ref{T:expupperbd} and \ref{T:oiolemma} (which still applies over $[0,1/4]$).
\begin{lemma}\label{T:mixedpartial3D}
Let $n=3$, $\phi(r,s) = d(\gamma(s),\eta(r))$ be as in \S\ref{S:variation}. Suppose $\wtM$ has constant sectional curvatures equal to $-1$.  Assume $(r_0,s_0)\in [0,\frac 14]^2$ satisfies
\begin{equation}\label{firstpartiallower}
|\prtl_{r}\phi(r_0,s_0)| \geq e^{-4T}.
\end{equation}
Denoting $\rho_0 = d(\gamma(s_0),\eta(r_0))$, suppose suppose further that $3 \leq \rho_0 \leq T$ and $\angle (\nabla_1 d(\gamma(s_0),\eta(r_0)),\pm\dot\gamma(s_0)) \geq e^{-T}$ for both choices of $\pm$.  There exists uniform constants $C_i>0$, $i=1,2$ such that for $T$ sufficiently large
\begin{equation}\label{thirdpartiallower3D}
|\prtl_{rss}^3\phi(r_0,s_0)| \geq e^{-C_1T} .
\end{equation}
whenever
\begin{equation}\label{secondpartialupper3D}
|\prtl_{rs}^2\phi(r_0,s_0)| \leq e^{-C_2T}.
\end{equation}
\end{lemma}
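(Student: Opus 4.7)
The plan is to emulate the 2D argument in \S\ref{SS:proofofpartiallowers}, applying the third variation formula \eqref{thirdvariation} to isolate the dominant term $|D_tW_0^\perp|_g^2\langle V_0,\dot\sigma_0\rangle$ and showing the other two terms are much smaller. Hypothesis \eqref{firstpartiallower} reads $|\langle V_0,\dot\sigma_0\rangle|=|\partial_r\phi(r_0,s_0)|\geq e^{-4T}$, and the Rauch comparison argument of \eqref{deeteedublower}, which uses only the angle hypothesis on $\dot\gamma(s_0)$, still yields $|D_tW_0^\perp|_g\geq\tfrac12 e^{-2T}$. So the dominant term has magnitude at least $\tfrac14 e^{-8T}$. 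The middle term of \eqref{thirdvariation} is $2\langle D_tW_0^\perp,V_0^\perp\rangle\langle D_tW_0,\dot\sigma_0\rangle$, whose magnitude is at most $2|\partial_{rs}^2\phi|\cdot|\langle D_tW_0,\dot\sigma_0\rangle|\leq\tfrac23 e^{-C_2T}$ via \eqref{linearity} and $\rho_0\geq 3$.

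The main obstacle is the first term $-\langle V_0^\perp,(D_s^2\partial_t\Psi)^\perp\rangle$, for which we cannot argue that $|V_0^\perp|$ is small (as in the 2D case); instead the constant curvature hypothesis is exploited to align $(D_s^2\partial_t\Psi)^\perp|_{(r_0,s_0,0)}$ with $D_tW_0^\perp$. Concretely, in $\wtM=\mathbb{H}^3$ the 2-plane $\operatorname{span}(\dot\sigma_{\rho_0},\dot\gamma(s_0))\subset T_{\gamma(s_0)}\wtM$—genuinely 2-dimensional thanks to the angle hypothesis $\angle(\dot\gamma(s_0),\pm\dot\sigma_{\rho_0})\geq e^{-T}$—is the tangent space to a unique totally geodesic 2-plane $\Sigma\cong\mathbb{H}^2$ through $\gamma(s_0)$. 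This $\Sigma$ contains both $\gamma$ and $\sigma$, hence also $\eta(r_0)$, and therefore every geodesic from $\eta(r_0)$ to $\gamma(s)$ for $s$ close to $s_0$. Thus $\Psi(r_0,s,t)\in\Sigma$, and in particular $\partial_t\Psi(r_0,s,0)\in T_{\eta(r_0)}\Sigma$ for each $s$. Because $s\mapsto\Psi(r_0,s,0)=\eta(r_0)$ is a constant curve, the covariant derivatives $D_s^j\partial_t\Psi|_{(r_0,\cdot,0)}$ reduce to ordinary derivatives of a curve in $T_{\eta(r_0)}\wtM$ and therefore remain in the fixed 2-plane $T_{\eta(r_0)}\Sigma$.

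Let $\{P^{(1)},P^{(2)}\}$ be an orthonormal basis of $\dot\sigma_0^\perp\subset T_{\eta(r_0)}\wtM$ with $P^{(1)}$ spanning the 1-dimensional line $T_{\eta(r_0)}\Sigma\cap\dot\sigma_0^\perp$. Because $W_t=\partial_s\Psi(r_0,s_0,t)$ is a Jacobi field along $\sigma\subset\Sigma$ with $W_0=0$, its perpendicular part stays in $T\Sigma$, and so $D_tW_0^\perp$ is a scalar multiple of $P^{(1)}$. By the previous paragraph the same holds for $(D_s^2\partial_t\Psi)^\perp|_{(r_0,s_0,0)}$. Writing $V_0^\perp=v_1 P^{(1)}+v_2 P^{(2)}$, the hypothesis \eqref{secondpartialupper3D} combined with $|D_tW_0^\perp|_g\geq\tfrac12 e^{-2T}$ forces $|v_1|\leq 2e^{(2-C_2)T}$. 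Meanwhile, the commutator identity $D_s^2\partial_t\Psi=D_tD_s\partial_s\Psi+R(\partial_s\Psi,\partial_t\Psi)\partial_s\Psi$ reduces to $D_tD_s\partial_s\Psi$ at $(r_0,s_0,0)$ since $W_0=0$, and Lemma \ref{T:iteratedcovariantlemma} bounds this vector by $Ce^{CT}$. Thus the first term of \eqref{thirdvariation} has magnitude at most $|v_1|\cdot|(D_s^2\partial_t\Psi)^\perp|_g\lesssim e^{(C+2-C_2)T}$.

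Combining the three estimates gives
\[
|\partial_{rss}^3\phi(r_0,s_0)|\geq \tfrac14 e^{-8T}-Ce^{(C+2-C_2)T}-\tfrac23 e^{-C_2T},
\]
and choosing $C_2$ sufficiently large (say $C_2\geq\max(9,C+11)$) absorbs both error terms into half of the leading contribution for $T$ large, yielding \eqref{thirdpartiallower3D} with, e.g., $C_1=9$. The principal conceptual step in the whole proof is the totally-geodesic-2-plane argument that aligns $(D_s^2\partial_t\Psi)^\perp$ with $D_tW_0^\perp$ in the single direction $P^{(1)}$; everything else is bookkeeping with the exponents coming from Lemma \ref{T:iteratedcovariantlemma} and the Rauch comparison bounds.
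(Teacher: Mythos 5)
Your proof is correct and follows essentially the same approach as the paper: you decompose via the third-variation formula \eqref{thirdvariation}, identify the dominant term $|D_tW_0^\perp|_g^2\langle V_0,\dot\sigma_0\rangle$, and use the constant-curvature hypothesis to place $\Psi(r_0,s,\cdot)$ in a totally geodesic $\mathbb{H}^2$, forcing $(D_s^2\partial_t\Psi)^\perp$ to be parallel to $D_tW_0^\perp$ so that its pairing with $V_0^\perp$ is controlled by $|\partial^2_{rs}\phi|$. The only minor stylistic difference is that you justify $(D_s^2\partial_t\Psi)(r_0,s,0)\in T_{\eta(r_0)}\Sigma$ by noting that covariant $s$-derivatives reduce to ordinary derivatives along the constant curve $s\mapsto\eta(r_0)$, whereas the paper invokes the vanishing second fundamental form of $N$ — both are valid and equivalent.
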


\begin{proof}
Using the dimensionless bounds \eqref{deeteedublower} and \eqref{accelerationbd}, we have that
\[
1 \geq |D_tW_0^\perp|_g \geq e^{-2T} \quad\text{ and } \quad|(D_s^2\prtl_t \Psi|_{(r_0,s_0,0)})^\perp|_g \lesssim e^{CT}
\]
respectively.  Our main claim is that for some larger value of $C$,
\begin{equation}\label{accelerationbd3D}
\left|\langle V_0^\perp, (D_s^2\prtl_t \Psi)^\perp|_{(r_0,s_0,0)}\rangle\right|
\lesssim
e^{CT}|\prtl_{rs}^2\phi(r_0,s_0)|.
\end{equation}
This was the key inequality in the proof of Lemma \ref{T:mixedpartial}, where the rigidity of 2 dimensions allowed us to argue that upper bounds on $|\langle V_0^\perp, D_t W_0^\perp\rangle|$ implied bounds on $|V_0^\perp|_g$.  Here we use that $D_tW_0$ and $(D_s^2\prtl_t \Psi)^\perp|_{(r_0,s_0,0)}$ are tangent to the same 2 dimensional submanifold to reason the same way.  It is this step that uses the constant curvature hypothesis in an essential way and seems to be a considerable obstacle to establishing Theorem \ref{T:maintheorem3D} in the more general setting of nonconstant nonpositive curvature. Note that
\[
|\prtl_{r}\phi(r_0,s_0)| = |\langle \nabla_1 d(\eta(r_0),\gamma(s_0)), \dot\eta(r_0)\rangle| = |\langle V_0, \dot\sigma_0\rangle |.
\]
So once \eqref{accelerationbd3D} is established, we have that by \eqref{thirdvariation} and the hypothesis \eqref{firstpartiallower}
\[
|\prtl_{rss}^3\phi(r_0,s_0)| \gtrsim (\log\lambda)^{-4} e^{-4T}-e^{CT}|\prtl_{rs}^2\phi(r_0,s_0)| = e^{-8T}-e^{CT}|\prtl_{rs}^2\phi(r_0,s_0)|
\]
for some $C$ large enough.  The lemma then follows by taking $C_2$ sufficiently large relative to the exponential constant here.

To see \eqref{accelerationbd3D}, we use an argument which is a coordinate free variation on one appearing in \cite[p.453-4]{ChenSogge}.  Let $N$ be the image of the 2 dimensional subspace span$(\dot\gamma(s_0),\dot\sigma(\rho_0))$ under the exponential map at acting on the tangent space at $\gamma(s_0) =\sigma(\rho_0)$.  Since $(\wtM,\tilde g)$ is hyperbolic space, it is known $N$ is a 2 dimensional totally geodesic submanifold of $\wtM$.  But this means that the geodesic triangle joining $\eta(r_0)$, $\gamma(s)$, and $\sigma(\rho_0)$ must lie in $N$.  In other words, $\Psi(r_0,s,t)\in N$, which implies that both $\prtl_s \Psi(r_0,s,t)$ and $\prtl_t \Psi(r_0,s,t)$ are tangent to $N$ for $(s,t) \in [0,1]\times [0,\rho_0]$.  Moreover, since the second fundamental form of $N$ vanishes, $(D_s^2\prtl_t \Psi)(r_0,s,t)$ is also tangent to $N$.  Now let $Q_t, W_t^\perp$ be as in \eqref{paralleljacobi}.  The previous observation implies that $W_t^\perp$ is tangent to $N$ and hence $Q_t$ is tangent to $N$ for every $t \in [0,\rho]$.  Hence $Q_t$, $\dot\sigma_t$ are orthonormal vectors that span the tangent space to $N$ at every point $\sigma(t)$.  This implies that $(D_s^2\prtl_t \Psi)^\perp = |(D_s^2\prtl_t \Psi)^\perp|_gQ_t$ and hence
\[
|\langle V_0^\perp, (D_s^2\prtl_t \Psi|_{(r_0,s_0,0)})^\perp\rangle| = |\langle V_0^\perp, Q_0\rangle|\,
|(D_s^2\prtl_t \Psi|_{(r_0,s_0,0)})^\perp|_g.
\]
Since $D_tW_0^\perp = |D_tW_0^\perp|_g Q_0$, we also have
\[
|\prtl_{rs}^2\phi(r_0,s_0)| = |\langle V_0^\perp, D_t W_0^\perp\rangle| = |\langle V_0^\perp, Q_0\rangle|\,
|D_tW_0^\perp|_g.
\]
We can thus conclude that
\[
|\langle V_0^\perp, (D_s^2\prtl_t \Psi|_{(r_0,s_0,0)})^\perp\rangle| =
\frac{|\prtl_{rs}^2\phi(r_0,s_0)|\,|(D_s^2\prtl_t \Psi|_{(r_0,s_0,0)})^\perp|_g}{|D_tW_0^\perp|_g}
\]
which implies \eqref{accelerationbd3D} once we recall the exponential upper and lower bounds on $|(D_s^2\prtl_t \Psi|_{(r_0,s_0,0)})^\perp|_g$, $|D_tW_0^\perp|_g$ respectively.
\end{proof}

We now consider the case where
\begin{equation}\label{isolatedcase}
|\prtl_{s}\phi(r_1,s_1)|, |\prtl_{r}\phi(r_1,s_1)| \leq e^{-4T} \qquad \text{ for some }(r_1,s_1) \in [0,1/4]^2.
\end{equation}
The first observation is that such points are in some sense isolated in $[0,1/4]^2$, which follows by the following lemma which is symmetric in $(r,s)$.  In \cite{ChenSogge}, the authors showed that the joint zeros of $|\prtl_{s}\phi(r_1,s_1)|= |\prtl_{r}\phi(r_1,s_1)|=0$ are unique by arguing that 2 distinct joint zeros, give rise to a geodesic quadrilateral with all four angles equal to $\pi/2$, which is impossible in the presence of negative curvature.  Here we must work quantitatively, considering the case where $|\prtl_{r}\phi|$, $|\prtl_{s}\phi|$ are both small, but not necessarily vanishing.  We instead use a somewhat crude convexity argument to show this principle.  It is this lemma that uses the rather artificial restriction to $(r,s)\in[0,1/4]^2$.

\begin{lemma}\label{T:isolatedlemma}
Suppose $(r_1,s_1) \in [0,1/4]^2$ satisfies
\begin{equation*}
|\prtl_{r}\phi(r_1,s_1)| \leq e^{-4T} = (\log \lambda)^{-4}
\end{equation*}
for some $T=\log\lambda$ sufficiently large.  Suppose further that $\phi(r,s)$ is sufficiently large in $[0,1/4]^2$ in that $\sinh \phi(r,s) \geq  64$ and $\coth \phi(r,s)\leq 3/2$. Then if $(r,s) \in [0,1/4]^2$ satisfies
\begin{equation}\label{outsidebadcube}
|r-r_1| \geq \max\left(\frac 12 |s-s_1|,(\log \lambda)^{-3}\right)
\end{equation}
we have the lower bound
\begin{equation}\label{phirlower}
|\prtl_{r}\phi(r,s)| \gtrsim e^{-3T}=(\log \lambda)^{-3}.
\end{equation}
\end{lemma}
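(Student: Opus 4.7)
The plan is to derive a first-order ODE for the slice $r\mapsto \prtl_r\phi(r,s_1)$, linearize it by a hyperbolic tangent substitution, and then transfer the resulting lower bound from $s=s_1$ to general $s$ using the fact that $|\prtl_{rs}^2\phi|$ is exponentially small in $\phi$. The ``crude convexity argument'' advertised in the paper is really just the monotonicity of this linearized ODE, and the restriction to $[0,1/4]^2$ is used only to keep the linearized variable small enough that $\tanh$ can be inverted with uniform constants.

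First I would re-derive, as a pointwise identity,
\[
\prtl_{rr}^2\phi(r,s_1) \;=\; \coth(\phi(r,s_1))\bigl(1 - (\prtl_r\phi(r,s_1))^2\bigr),
\]
either by re-running the argument of Remark \ref{R:mixedpartialexpressions} with $(r,s_1)$ now playing the role of the base point, or by recognizing this as the classical formula for the Hessian of the hyperbolic distance function. The inputs are the normal Jacobi field formula \eqref{paralleljacobi}, which in constant curvature $-1$ yields $D_tV_0^\perp = -\coth(\phi)V_0^\perp$, together with the identity $|V_0^\perp|_g^2 = 1 - \langle V_0,\dot\sigma_0\rangle^2 = 1-(\prtl_r\phi)^2$. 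Setting $f(r):=\prtl_r\phi(r,s_1)$ and $u(r):=\tanh^{-1}(f(r))$ converts the Riccati equation $f'=\coth(\phi)(1-f^2)$ into the linear equation $u'(r)=\coth(\phi(r,s_1))$, and the pinch hypothesis $\coth\phi\le 3/2$ gives $u'\in[1,3/2]$.

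Next, I would integrate. The smallness $|f(r_1)|\le e^{-4T}$ yields $|u(r_1)|\le 2e^{-4T}$, and since $u'\ge 1$ the difference $u(r)-u(r_1)$ has the same sign as $r-r_1$ with magnitude at least $|r-r_1|$. Hence for $|r-r_1|\ge e^{-3T}$ and $T$ large, $|u(r)|\ge |r-r_1|/2$. Since $|u(r)|\le 2e^{-4T}+\tfrac{3}{2}\cdot\tfrac14<\tfrac12$ throughout $[0,1/4]^2$, the elementary inversion $|\tanh u|\ge |u|/2$ applies and yields $|\prtl_r\phi(r,s_1)|=|f(r)|\ge |r-r_1|/4$. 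To carry this over to arbitrary $s$, I would bound $|\prtl_{rs}^2\phi|$ via \eqref{secondvariation}: the field $W_t^\perp = \frac{\sinh t}{\sinh\phi}|\dot\gamma^\perp|_g Q_t$ from \eqref{paralleljacobi} gives $|D_tW_0^\perp|_g \le (\sinh\phi)^{-1}$, hence $|\prtl_{rs}^2\phi|\le(\sinh\phi)^{-1}\le 1/64$ by hypothesis.

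Combining, for $(r,s)$ with $|r-r_1|\ge \max(|s-s_1|/2,\,e^{-3T})$,
\[
|\prtl_r\phi(r,s)| \;\ge\; |\prtl_r\phi(r,s_1)| - \tfrac{1}{64}|s-s_1| \;\ge\; \tfrac14|r-r_1| - \tfrac{1}{32}|r-r_1| \;\gtrsim\; e^{-3T},
\]
which is \eqref{phirlower}. The main conceptual obstacle — and the place where the constant-curvature hypothesis is genuinely used — is the first step: recognizing that the Jacobi field computation collapses to a clean Riccati equation and identifying the linearizing substitution $u=\tanh^{-1}(f)$. In variable nonpositive curvature the coefficient $\coth\phi$ would be replaced by an unknown function, and one would lose the constant-coefficient inequality $u'\ge 1$ that drives the linear growth estimate.
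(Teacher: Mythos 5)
Your proof is correct and reaches the conclusion by a genuinely different route from the paper's. The paper applies the mean value theorem directly to $\prtl_r\phi$ on the segment from $(r_1,s_1)$ to $(r,s)$: it lower-bounds the $r$-growth using $\prtl_{rr}^2\phi = |\dot\eta^\perp|_g^2\coth\phi \geq \tfrac{1}{16}$ (for which it first establishes $|\dot\eta(r)^\perp|_g^2 \geq 1/16$ throughout $[0,1/4]^2$ as a separate step) and controls the $s$-drift by $|\prtl_{rs}^2\phi|\leq \csch\phi\leq 1/64$. The ``crude convexity argument'' the paper refers to is exactly this one-step Taylor/MVT estimate. You instead observe that \eqref{2ndpartialr0} together with $|\dot\eta^\perp|_g^2 = 1-(\prtl_r\phi)^2$ yields the exact Riccati equation $\prtl_{rr}^2\phi=\coth(\phi)\bigl(1-(\prtl_r\phi)^2\bigr)$ on each $s$-slice, linearize it by $u=\tanh^{-1}(\prtl_r\phi)$ to get $u'=\coth\phi\in[1,3/2]$, integrate exactly, and then transfer to general $s$ using the same $\csch\phi\leq 1/64$ bound. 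Both arguments use precisely the same geometric ingredients (the constant-curvature formulas derived from \eqref{paralleljacobi}, \eqref{2ndpartialr0}, \eqref{secondvariation}, and the pinch hypotheses on $\coth\phi$ and $\sinh\phi$), and both use the restriction to $[0,1/4]^2$ to keep the relevant quantity bounded ($(\prtl_r\phi)^2$ away from $1$ in the paper, $|u|<1/2$ in yours). What your version buys: the $\tanh^{-1}$ substitution makes the linear growth rate manifest and folds the $|\dot\eta^\perp|_g^2\geq 1/16$ step into the uniform coefficient bound $u'\geq 1$, so the constant loss enters once through $|\tanh u|\geq |u|/2$ rather than through the square of the perpendicular component. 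One small point of care: the substitution $u=\tanh^{-1}(f)$ requires $|f|<1$ a priori, which is guaranteed because $\prtl_r\phi$ is a cosine of an angle between a geodesic tangent and the gradient of distance to a disjoint geodesic; once inside the linearized equation the uniform bound $|u|<1/2$ (hence $|f|\leq\tanh(1/2)<1$) follows, so there is no circularity.
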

Thus if \eqref{isolatedcase} is satisfied, we let $\tilde{a}_{\pm,\alpha}$ be the product of $a_{\pm,\alpha}$ with a bump function such that $\tilde{a}_{\pm,\alpha}$ vanishes in a cube of sidelength $2(\log \lambda)^{-3}$ about $(r_1,s_1)$ and
\begin{equation}\label{amplitudeC1}
\|\tilde{a}_{\pm,\alpha}\|_{C^0}+(\log \lambda)^{-3}\|\nabla \tilde{a}_{\pm,\alpha}\|_{C^0} \lesssim 1.
\end{equation}
Setting
\[
\tilde{K}_\alpha(r,s) := \frac{\lambda}{T\phi(r,s)}\sum_{\pm}\tilde{a}_{\pm,\alpha}(r,s)e^{\pm i\lambda\phi(r,s)},
\]
we have
\[
\int_0^{\frac 14} |K_\alpha(r,s)-\tilde{K}_\alpha(r,s)|\,dr + \int_0^{\frac 14} |K_\alpha(r,s)-\tilde{K}_\alpha(r,s)|\,ds \lesssim \frac{\lambda}{(\log\lambda)^3}
\]
so that the contribution of $K_\alpha-\tilde{K}_\alpha$ to \eqref{kalphabd3D} is bounded by the right hand side there and we are left to prove \eqref{kalphabd3D} with $K_\alpha$ replaced by $\tilde{K}_\alpha$.

We now write $\tilde{a}_{\pm,\alpha}$ as the sum of two functions supported in $|r-r_1| \geq 2 |s-s_1|$ and $|s-s_1| \geq 2 |r-r_1|$ respectively.  Given the initial localization, this can be done so that \eqref{amplitudeC1} is still satisfied.  By taking adjoints, we are reduced to treating the component satisfying the former support condition. Then Lemmas \ref{T:mixedpartial3D} and \ref{T:oiolemma} apply to the resulting oscillatory integral operator, which concludes the proof of \eqref{kalphabd3D} as the $C^1$ norm of the new amplitude introduces an acceptable loss of $O((\log\lambda)^3) = O(e^{3T})$ in the estimates.

\begin{proof}[Proof of Lemma \ref{T:isolatedlemma}]
Observe that
\begin{align}
\prtl_{rr}^2\phi(r,s) &= |\dot\eta(r)^\perp|^2_g\coth d(\eta(r),\gamma(s)),\label{2ndpartialr}\\
|\prtl_{rs}^2\phi(r,s)| &\leq |\dot\eta(r)^\perp|_g |\dot\gamma(r)^\perp|_g\, \csch d(\eta(r),\gamma(s)),\label{2ndpartials}
\end{align}
where the $\perp$ denotes the component orthogonal to the geodesic joining $\eta(r)$ and $\gamma(s)$.  These can be verified at a point $(r_0,s_0)$ by inserting the identities in \eqref{paralleljacobi} into
\eqref{2ndpartialr0} and \eqref{secondvariation}.  We will first prove the lower bound \eqref{phirlower} under the assumption that $|\dot\eta(r)^\perp|^2_g \geq 1/16$, then justify this below.  Given all this, a Taylor expansion shows that for some $(\tilde r, \tilde s)\in [0,1/4]$,
\begin{multline*}
|\prtl_{r}\phi(r,s)| \geq  |r-r_1||\dot\eta(\tilde r)^\perp|^2_g\,\coth d(\eta(\tilde r),\gamma(\tilde s)) - \\
|s-s_1||\dot\eta(\tilde r)^\perp|_g |\dot\gamma(\tilde r)^\perp|_g \,\csch d(\eta(\tilde r),\gamma(\tilde s)) -|\prtl_{r}\phi(r_1,s_1)|,
\end{multline*}
Since we assume that $\sinh d(\eta(\tilde r),\gamma(\tilde s)) \geq 64$ and $\coth d(\eta(\tilde r),\gamma(\tilde s)) \leq 3/2$
\begin{equation*}
|\prtl_{r}\phi(r,s)| \geq  \frac{1}{16}|r-r_1| -
\frac 1{64}|s-s_1| - e^{-4T}  \geq \frac 1{32}|r-r_1|-e^{-4T} .
\end{equation*}
The lower bound \eqref{phirlower} now follows for large $T$ when $|r-r_1|\geq (\log\lambda)^{-3}$.

To see that $|\dot\eta(r)^\perp|^2_g \geq 1/16$, first observe that since we also assume that
$\coth d(\eta(r),\gamma(s)) \leq 3/2$, we have that by \eqref{2ndpartialr}, \eqref{2ndpartials},
\[
|\prtl_{r}\phi(r,s)| \leq \frac 32 |r-r_1| + \frac 1{64} |s-s_1| + |\prtl_{r}\phi(r_1,s_1)| \leq \frac 12 +e^{-4T}\leq \frac{\sqrt{15}}{4}.
\]
This is enough since
$
|\dot\eta(r)^\perp|^2_g = 1-\langle \nabla_1 d(\eta(r),\gamma(s)),\dot{\eta}(r)\rangle^2 = 1-(\prtl_{r}\phi(r,s))^2.
$
\end{proof}

\bibliographystyle{amsalpha}
\bibliography{bibtexdata}
\end{document}